\documentclass[12pt]{article}

\usepackage{amssymb}
\usepackage{amsthm}
\usepackage{graphicx, gastex}
\usepackage{amsmath}
\usepackage{latexsym}
\usepackage{amsfonts}

\DeclareSymbolFont{rsfscript}{OMS}{rsfs}{m}{n}
\DeclareSymbolFontAlphabet{\mathrsfs}{rsfscript}

\DeclareSymbolFont{rsfscript}{OMS}{rsfs}{m}{n}
\newtheorem{theorem}{Theorem}
\newtheorem{prop}{Proposition}
\newtheorem{defn}{Definition}
\newtheorem{lemma}{Lemma}
\newtheorem{cor}{Corollary}

\newtheorem{rem}{Remark}

\newcommand{\sch}{{Sch\"{u}tzenberger }}

\newcommand{\rf}{\rightarrow}

\newcommand{\la}{\langle}
\newcommand{\ra}{\rangle}
\newcommand{\wt}{\widetilde}
\newcommand{\inv}{^{-1}}

\def\mapright#1{\smash{\mathop{\longrightarrow}\limits^{#1}}}
\def\vlongrightarrow{\relbar\joinrel\longrightarrow}
\def\vvlongrightarrow{\relbar\joinrel\vlongrightarrow}

\def\longmapright#1{\smash{\mathop{\vlongrightarrow}\limits^{#1}}}
\def\vlongmapright#1{\smash{\mathop{\vvlongrightarrow}\limits^{#1}}}

\hyphenation{de-fi-ni-tion}

\title{Amalgams of inverse semigroups and reversible two-counter machines.}
\author{
        Emanuele Rodaro, Pedro V. Silva \\
        Centro de Matem\'{a}tica, Faculdade de Ci\^{e}ncias\\
        Universidade do Porto, 4169-007 Porto, Portugal\\
        \{emanuele.rodaro,pvsilva\}@fc.up.pt
}
\date{\today}

\begin{document}
\maketitle

\begin{abstract}
We show that the word problem for an amalgam
$[S_1,S_2;U,\omega_1,\omega_2]$ of inverse
semigroups may be undecidable even if we assume $S_1$ and $S_2$ (and
therefore $U$) to have finite $\mathcal{R}$-classes and
$\omega_1,\omega_2$ to be
computable functions, interrupting a series of positive decidability
results on the subject. This is achieved by encoding into an appropriate
amalgam of inverse
semigroups $2$-counter machines with sufficient universality, and
relating the nature of certain \sch graphs
to sequences of computations in the machine.
\end{abstract}

\section{Introduction}

If $S_1$, $S_2$ and $U$ are semigroups (groups) such that $U$ embeds
into $S_1, S_2$ via
two monomorphisms $\omega_1,\omega_2$ then
$[S_1,S_2;U,\omega_1,\omega_2]$ is called an
amalgam of semigroups (groups). The amalgamated free product $S_1*_U S_2$
associated with this
amalgam in the category of
semigroups (groups) is defined by the usual universal diagram.
The amalgam $[S_1,S_2;U,\omega_1,\omega_2]$ is said to be strongly
embeddable in a
semigroup (group) $S$ if there exist injective homomorphisms $\phi_i:
S_i\rf S$ such that
$\phi_1|_{\omega_1(U)}=\phi_2|_{\omega_2(U)}$ and $\phi_1(S_1)\cap \phi_2(S_2)=
\phi_1(\omega_1(U))= \phi_2(\omega_2(U))$. It is well known that every
amalgam of groups
embeds in a group (and hence in the amalgamated free product of the
group amalgam)
\cite{lyndon}. However there are many examples showing that in the
category of semigroups
not every amalgam of two semigroups is embeddable into a semigroup,
see \cite{Kim,Sapir}.
On the other hand, T.E. Hall \cite{Hall2} showed that every amalgam of
inverse semigroups
(in the category of inverse semigroups) embeds in an inverse
semigroup, and so in the
corresponding amalgamated free product in the category of inverse semigroups.
In general the class of embeddable amalgams of infinite semigroups
behaves badly from the
algorithmic point of view. In the paper \cite{BiMaMi}, Birget,
Margolis, and Meakin
proved that the amalgamated free product of two finitely presented
semigroups with
solvable word problems and a nice common subsemigroup may have
undecidable word problem.
This result has been further
strengthened by Sapir \cite{Sapir} by showing that an amalgamated free
product of finite
semigroups may have undecidable word problem. These results are in
contrast to the
situation for amalgamated free products of groups where the word problem is
decidable (see
\cite{lyndon}) assuming general nice conditions on the amalgam. The
decidability result
is a consequence of a normal form theorem for the free product with
amalgamation of groups. A
sort of normal form, but with a more geometric flavor, can be defined
in the case of
lower bounded or finite amalgams of inverse semigroups, see
\cite{Ben,Finite}. In
\cite{Ben} Bennett showed that the word problem for lower bounded
amalgam of inverse
semigroups has decidable word problem, this is achieved by giving an
ordered way to build
the \sch automata in such structure. Taking inspiration from Bennett's paper,
Cherubini, Meakin and Piochi in \cite{Finite}, quite in contrast with
Sapir's result,
showed that the word problem for any amalgamated free product of finite inverse
semigroups is decidable in the category of inverse semigroups. As
already pointed out,
this result has been achieved adapting nontrivially the work of Bennett
to the finite case
to obtain a sort of geometric normal form of a word $w$, called by the
authors $Core(w)$.
This fact, along with other lifting properties of \sch automata for
amalgams of finite
inverse semigroups, gave rise to a series of papers
\cite{ChNuRo,RoByci,RoChe} in the
decidability direction. Although these clues can push toward a
decidability result for
amalgams of inverse semigroups with nice conditions, in this paper we
give a result which goes in the opposite direction:

\begin{theorem}
The word problem for an amalgam
$[S_1,S_2;U,\omega_1,\omega_2]$ of inverse
semigroups may be undecidable even if we assume $S_1$ and $S_2$ (and
therefore $U$) to have finite $\mathcal{R}$-classes and
$\omega_1,\omega_2$ to be
computable functions.
\end{theorem}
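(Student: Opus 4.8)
The plan is to reduce the halting problem of a Turing-universal reversible two-counter machine to the word problem for a carefully constructed amalgam, exploiting the description of \sch graphs in amalgams of inverse semigroups developed by Bennett \cite{Ben} and by Cherubini--Meakin--Piochi \cite{Finite}. Recall that, for an inverse semigroup $S$, two words $u,v$ are equal in $S$ if and only if $v$ labels a path between the distinguished vertices of the \sch graph $\Gamma(u)$ and conversely; and that, in $S_1*_U S_2$, the graph $\Gamma(w)$ is produced by an iterative procedure starting from the linear automaton of $w$: one repeatedly replaces each maximal monochromatic piece (a ``lobe'') by the full \sch graph of the corresponding element of $S_i$, this completion creates new vertices sitting in $\omega_i(U)$, fresh lobes of the opposite colour sprout from those vertices, and the process continues. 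When $S_1$ and $S_2$ have finite $\mathcal{R}$-classes each lobe is finite, but the resulting tree of lobes --- and hence $\Gamma(w)$ --- may be infinite. The whole strategy is to engineer $S_1,S_2,U$ so that, for suitable input words, this tree is forced to be a \emph{line} of lobes faithfully tracing a run of the machine.

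First I would fix a Turing-universal reversible two-counter machine $M$; reversibility is essential, because in an inverse semigroup edges come in inverse pairs and the graph-building process is symmetric, so only a machine deterministic in both time directions yields a line rather than a branching tree of lobes. Next I would present inverse semigroups $S_1$ and $S_2$ sharing a common inverse subsemigroup $U$, so designed that: (i) all $\mathcal{R}$-classes of $S_1$ and $S_2$ are finite, directly from the defining relations; (ii) $\omega_1,\omega_2$ are computable, again immediate from the presentations; and (iii) the finite \sch graphs of $S_1$ and $S_2$ serve as ``transition gadgets'', each realising one instruction of $M$ (increment, decrement, or zero-test of a counter) and carrying an input port and an output port in $U$ that encode the current configuration $(q,c_1,c_2)$ of $M$. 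Crucially, the unbounded counter values are coded not inside a single (finite) $\mathcal{R}$-class of $S_i$, but as lengths accumulated along the growing chain of lobes.

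The technical core is then a simulation lemma: for a word $w_0$ coding the initial configuration of $M$, the \sch graph $\Gamma(w_0)$ in $S_1*_U S_2$ is exactly the chain of transition gadgets corresponding to the successive configurations visited by $M$, glued port-to-port along $U$. Granting this, one chooses a word $v$ that is readable between the distinguished vertices of $\Gamma(w_0)$ precisely when $M$ reaches its halting configuration; equivalently, $w_0$ equals an associated word in the amalgam if and only if $M$ halts. Since the halting problem for $M$ is undecidable and, by construction, the amalgam satisfies all the hypotheses of the statement, the word problem for $[S_1,S_2;U,\omega_1,\omega_2]$ is undecidable.

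The main obstacle, absorbing most of the work, is to prove that no \emph{unexpected} identifications arise while building $\Gamma(w_0)$: a priori the completion and folding steps could merge vertices belonging to distinct configurations of $M$, short-circuiting or truncating the computation, or even making $\Gamma(w_0)$ finite. Ruling this out requires (a) coding configurations into $U$ so that distinct configurations occupy $\mathcal{D}$-separated (or otherwise non-foldable) elements of $U$; (b) using the reversibility of $M$ to ensure each port-vertex has exactly one incoming and one outgoing gadget, so that the lobe-tree is genuinely a line; and (c) a delicate implementation of the zero-test instructions by gadgets that attach differently according to whether the relevant counter-length is zero or positive, all while keeping each gadget --- that is, each \sch graph of $S_i$ --- finite. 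Once the simulation lemma is established, checking that finiteness of $\mathcal{R}$-classes survives the final presentations, and that the reduction is effective, is routine.
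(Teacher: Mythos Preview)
Your high-level strategy coincides with the paper's: encode a universal reversible two-counter machine into an amalgam so that the \sch graph of a suitable input word traces the run of the machine, and reduce acceptance to the word problem. Where you diverge is in the technical scaffolding. You plan to lean on the Bennett and Cherubini--Meakin--Piochi lobe machinery, but those structure theorems are proved for lower-bounded, respectively finite, amalgams; the amalgam built here is neither a priori, so invoking that framework would itself need justification. The paper sidesteps this entirely by working directly with Stephen's approximation sequence: for each computation step $k$ it writes down an explicit finite inverse automaton $\mathcal{B}_{m,n}^{(k)}$ (a grid whose $i$-th row carries the $i$-th configuration, the counter values being the horizontal position of a single $Q$-labelled edge, not ``lengths accumulated along a chain of lobes'') and checks by induction that it arises from the Munn tree of $w_{m,n}=\perp_1 a_1^m \iota_1 a_2^n \perp_2$ by expansions and foldings.

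Two further technical devices you do not mention are doing real work in the paper. First, the machine is normalized to be \emph{alternating} (consecutive instructions act on different counters); this is precisely what keeps the $S_1$- and $S_2$-relations from interacting and makes the ``no unexpected identifications'' check (your obstacle (b)) a one-line appeal to determinism, reversibility and alternation rather than an abstract lobe-tree argument. Second, the halting test is not ``some word $v$ becomes readable'' but simply $w_{m,n}=0$: reaching the final state $f$ fires the zero-collapsing relations in $\mathcal{T}_i^f$, and conversely if $f$ is never reached one verifies directly that the colimit $\mathcal{B}_{m,n}$ is already $\mathcal{T}$-closed. This makes the reduction and the finiteness-of-$\mathcal{R}$-classes verification (your ``routine'' step, here Proposition~\ref{finiter}) concrete rather than schematic.
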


This is achieved by encoding into an appropriate amalgam of inverse
semigroups a $2$-counter machine having the right property, but still
general enough to hold universality properties with respect to Turing
machines. Then we show how the nature of certain \sch graphs relates
to sequences of computations in the machine.

\section{Inverse semigroups}

A semigroup $S$ is said to be {\em inverse} if, for
each element $a \in S$, there is a unique element $a^{-1} \in S$
such that $a = a a^{-1} a$ and $a^{-1} = a^{-1} a a^{-1}$. A
consequence of the definition is that idempotents commute in any
inverse semigroup. Moreover, a natural partial order can be
defined on $S$ by $a \leq b$ if and only if $a = e b$ for some idempotent
$e$ of $S$.

Inverse semigroups may be regarded as semigroups of
partial one-to-one transformations closed under inversion, so they
arise very naturally in several areas of mathematics. More recently,
also computer scientists have been paying attention to inverse
semigroups for different reasons. First, the inverse of an element in
an inverse semigroup can be seen as the ``undo'' of the action
represented by the
element. Second, algorithmic problems for inverse semigroups have
received considerable attention in the literature during the past
30-35 years and in the cases where such problems are decidable,
the analysis of the complexity of these algorithms is a quite
natural issue.

The natural geometric framework to deal with algorithmic problems in
an inverse semigroup generated by $X$ is
the class of \emph{inverse $X$-graphs}. Write
 $\wt{X} = X\cup X^{-1}$, where $X^{-1}=\{x^{-1}:x\in
X\}$ is a set of formal inverses of $X$. We can extend $\cdot^{-1}$ to
an involution on $\wt{X}^*$ through $(x\inv)\inv = x$ $(x \in X)$ and
$(uv)\inv = v\inv u\inv$ $(u,v \in \wt{X}^*)$.

An inverse $X$-graph is a pair $\Gamma = (V,E)$ where $V = V(\Gamma)$
denotes the
vertex set and $E = E(\Gamma) \subseteq V \times X \times V$ the edge
set. We say
that $\Gamma$ is:
\begin{itemize}
\item
{\em involutive} if $(p,x,q) \in E \Rightarrow (q,x\inv,p) \in E$ for
all $p,q \in V$ and $x \in \wt{X}$;
\item
{\em deterministic} if $(p,x,q), (p,x,r) \in E \Rightarrow q = r$ for
all $p,q,r \in V$ and $x \in \wt{X}$;
\item
{\em connected} if any two vertices can be connected through a path in
$\Gamma$;
\item
{\em inverse} if it is involutive, deterministic and connected.
\end{itemize}
An \emph{inverse $X$-automaton} is a triple
$\mathcal{A} = (\alpha,\Gamma,\beta)$ where $\Gamma$ is an inverse $X$-graph and
$\alpha,\beta\in V(\Gamma)$ are respectively the initial and final
state of the automaton.

The free inverse semigroup on a set $X$, denoted by $FIS(X)$, is the
quotient of the free semigroup $\wt{X}^+$ by the least
congruence $\rho$ that makes the resulting quotient semigroup
inverse (Vagner congruence). Its elements may be seen as finite birooted
inverse $X$-trees usually known as {\em Munn trees} (see
\cite{munn}).

An inverse semigroup presentation is a formal expression of the form
Inv$\la X \mid R \ra$, where $R\subseteq \wt{X}^+ \times
\wt{X}^+$.  The presentation is finite if both $X$ and $R$ are finite.
The inverse semigroup defined by this presentation is the
quotient of $\wt{X}^+$ by the congruence $\tau$ generated by $\rho \cup
R$. We often represent the elements of $R$ as equalities of the form $w_1=w_2$.

In the paper \cite{Steph}, Stephen has extended the notion of Munn
tree introducing the Sch\"utzenberger automaton $\mathcal{A}(X,R;w)$
for a word $w \in \wt{A}^+$ relative to the presentation Inv$\la
X\mid R\ra$ of $S$. The underlying graphs $S\Gamma(X,R;w)$ of these
automata are the strongly connected components of the Cayley graph of
the presentation, which has $S$ as vertex set and edges
$(s,x,s(x\tau))$ $(s \in S, x \in \wt{X}$. More precisely,
$S\Gamma(X,R;w)$ is the restriction of this Cayley graph to the Green
$\mathcal{R}$-class of $w\tau$ in $S$. It turns out that $S\Gamma
(X,R;w)$ is an inverse $X$-graph and we can now define
the \sch automaton
$$\mathcal{A}(X,T;w) = ((ww^{-1})\tau,S\Gamma (X,T;w),w\tau).$$
The
importance of these automata stems from the fact that any two words $w,
w' \in \wt{X}^+$, represent the same element of $S$ if
and only if $\mathcal{A}(X,R;w) = \mathcal{A}(X,R;w')$, equivalent
also to
$$
w \in L(\mathcal{A}(X,R;w')) \quad \mbox{and} \quad  w' \in
L(\mathcal{A}(X,R;w)).
$$
Hence \sch automata are crucial for dealing with algorithmic
problems. In \cite{Steph} it is also provided an
iterative procedure for approximating the \sch automaton relative to a
given presentation Inv$\langle X\mid R\rangle$ of a word $w\in \wt{X}^+$
via two operations, $R$-\emph{expansions}
and \emph{foldings}. Let $\mathcal{A}$ be a finite inverse
$X$-automaton. An $R$-expansion of $\mathcal{A}$ consists in adding a
nonexisting path $p \mapright{r'} q$ to $\mathcal{A}$ (and the corresponding
inverse edges) whenever $p \mapright{r} q$ is a path in $\mathcal{A}$
but no path $p \mapright{r'} q$ is present $(r,r')\in R\cup R^{-1}$. A
folding consists in identifying
two edges starting at the same vertex and labeled by the same
letter of the alphabet $\wt{X}$. We say that $\mathcal{A}$ is
$R$-\emph{closed} if no $R$-expansion can be
performed. The iterative sistematic
application of $R$-expansions (considering all possible instances
simultaneously) and complete folding to the Munn tree of a word $w\in
\wt{X}^+$ (obtained
itself as the complete folding of the linear automaton $\to \bullet
\mapright{w} \bullet \to$) produces a (possibly infinite) sequence of
finite inverse $X$-automata
$$
\mathcal{A}_1\rf\mathcal{A}_2\rf\ldots\rf\mathcal{A}_j\rf\ldots
$$
whose colimit is $\mathcal{A}(X,R;w)$. This colimit procedure can in
fact be
applied starting from any finite inverse $X$-automaton $\mathcal{A}$,
producing as its colimit the $R$-\emph{closure} of
$\mathcal{A}$, i.e., the smallest (with respect to language)
$R$-closed $X$-inverse automaton recognizing all words in $L(\mathcal{A})$
(see \cite{Steph98}). Any finite inverse $X$-automaton obtained from
the Munn tree of $w$ by $R$-expansions and foldings is said to be a
{\em finite approximation} of $\mathcal{A}(X,R;w)$.

We close this section by fixing some notation for amalgams to be
preserved through the whole paper. Let $S_i = \wt{X_i}^+ / \tau_i$ be
the inverse semigroup
defined by Inv$\langle X_i\mid R_i\rangle$ for $i=1,2$ with $X_1\cap
X_2=\emptyset$. Let $\lambda_i:S_i\rf \wt{X_i}^+$ be a map such that, for
each $s\in S_i$, $(\lambda_i(s))\tau_i =s$ $(i = 1,2)$. Finally, let
$U$ be an inverse semigroup and $\omega_i:U \to S_i$ a monomorphism
for $i = 1,2$.
The {\em free product with amalgamation} $S_1*_US_2$ associated to the
amalgam $[S_1,S_2;U,\omega_1,\omega_2]$ is the
inverse semigroup presented by
$$
Inv\la X_1\cup X_2 \mid R_1\cup R_2\cup R_3\ra,
$$
where $R_3=\{(\lambda_1(\omega_1(u)),\lambda_2(\omega_2(u)))\mid u\in
U\}$.

\section{Counter machines}\label{sec:introduction counter machine}
\begin{defn}
  A $k$-counter machine (for short, $CM(k)$) is a system
  $$
  \mathcal{M}=(Q,\delta,\iota,f)
  $$
  where $k$ is the number of tapes, $Q$ is the nonempty finite set of internal
  states, $\iota\in Q$ is the initial state, and $f\in Q$ is the final
  (halting) state. The machine $\mathcal{M}$ uses $A = \{\perp,a\}$ as a
  tape alphabet
  ($\perp$ is a blank symbol), $\delta$ is a move relation which is a
  subset of $(Q \times \{1,\ldots,k\} \times A \times Q)\cup
  (Q \times\{1,\ldots,k\} \times D \times Q)$ where $D = \{ -,0,+\}$
  and the
  symbols $-,0,+$ denote respectively left-shift, no-shift, and
  right-shift of a head of the machine. Tapes are one-way (rightward)
  infinite. The leftmost squares of the tapes contain the blank symbol
  $\perp$, and all the other squares contain the symbol $a$.
\end{defn}
Each element of $\delta$ is thus a quadruple of one of the two forms:
$$
(q,i,s,q'),\;(q,i,d,q')
$$
where $q,q'\in Q$, $i\in\{1,\ldots,k\}$, $s\in A$ and
$d\in D$. A quadruple of the form $(q,i,s,q')$ means that if
$\mathcal{M}$ is in the state $q$ and the $i$th-head is reading the
symbol $s$ then the machine changes its state into $q'$. This
instruction is used
to test whether the content of a counter is zero (the head is reading
the symbol $\perp$) or positive (the head is reading a square with
symbol $a$). We call this kind of instructions \emph{test
  instructions}.
On the other hand an instruction $(q,i,d,q')$ means that if
$\mathcal{M}$ is in the state $q$ then:
\begin{itemize}
\item
it shifts the $i$th-head one
cell to the right if $d = +$,
\item
it shifts the $i$th-head one
cell to the left if $d = -$,
\item
it keeps the $i$th-head at the same
cell if $d = 0$,
\end{itemize}
and changes its state into $q'$. The evolution
of a $CM(k)$ $\mathcal{M}$ can be followed through instantaneous
descriptions of the machine:

\begin{defn}
An instantaneous description (for short, ID) of a $CM(k)$
$\mathcal{M}=(Q,\delta,\iota,f)$ is a $(k+1)$-tuple $(q,n_1,\ldots,
n_k)\in Q\times\mathbb{N}^k$. It represents that $\mathcal{M}$ is in
state $q$ and the $i$th-head is in position
$n_i$ for $i=1,\ldots,k$, where we assume the position of the leftmost
square of a tape to be $0$. The transition relation
$\vdash_{\mathcal{M}}$ is defined as follows:
$$
(q,n_l,\ldots, n_{i-1},n_i,n_{i+1}\ldots,
n_k)\vdash_{\mathcal{M}}(q',n_l,\ldots, n_{i-1},n_i',n_{i+1}\ldots,
n_k)
$$
holds if one of the following conditions is satisfied:
\begin{enumerate}
  \item $(q,i,\perp,q')\in\delta$ and $n_i=n_i'=0$.
  \item $(q,i,a,q')\in\delta$ and $n_i=n_i'>0$.
  \item $(q,i,-,q')\in\delta$ and $n_i-1=n_i'$.
  \item $(q,i,0,q')\in\delta$ and $n_i=n_i'$.
  \item $(q,i,+,q')\in\delta$ and $n_i+1=n_i'$.
\end{enumerate}
We denote the reflexive and transitive closure of
$\vdash_{\mathcal{M}}$ by $\vdash_{\mathcal{M}}^*$ and the $n$-step
transition relation by $\vdash_{\mathcal{M}}^n$.
\end{defn}

\begin{defn}
Let $\mathcal{M}=(Q,\delta,\iota,f)$ be a $CM(k)$.
We say that the $k$-tuple $(n_1,\ldots,n_k) \in \mathbb{N}^k$ is accepted by
$\mathcal{M}$ if
$$(\iota,n_1,\ldots,n_k) \vdash_{\mathcal{M}}^* (f,n'_1,\ldots,n'_k)$$
for some $(n'_1,\ldots,n'_k) \in \mathbb{N}^k$.
\end{defn}

\begin{defn}
Let $\mathcal{M}=(Q,\delta,\iota,f)$ be a $CM(k)$ and
$\alpha_i=(p_i,j_i,x_i,p_i')$ with $i\in\{1,2\}$ be two distinct
quadruples in $\delta$. We say that $\alpha_1,\alpha_2$ overlap in
domain if
$$
p_1=p_2\wedge (j_1\neq j_2\vee x_1=x_2\vee x_1\in D\vee x_2\in D).
$$
We say that $\alpha_1,\alpha_2$ overlap in range if
$$
p_1'=p_2'\wedge (i_1\neq i_2\vee x_1=x_2\vee x_1\in D\vee x_2\in D).
$$
\end{defn}
A quadruple $\alpha \in \delta$ is called \emph{deterministic}
(\emph{reversible}
respectively) if there is no other quadruple in $\delta$ which
overlaps in domain (range) with $\alpha$. The machine $\mathcal{M}$
is called \emph{deterministic} (\emph{reversible} respectively) if
every quadruple in $\delta$ is deterministic (reversible). It is clear
from the definitions that every ID of a deterministic (reversible,
respectively) $CM(k)$ has at most one ID that immediately follows
(precedes) it in some computation. In this paper we focalize on
reversible deterministic
2-counter machines because of their universality property. Indeed,
the following Theorem holds:
\begin{theorem}\cite{Morita}\label{theo:morita}
  For any deterministic Turing machine $\mathcal{T}$ there is a
  deterministic reversible $CM(2)$ $\mathcal{M}$ that simulates
  $\mathcal{T}$.
\end{theorem}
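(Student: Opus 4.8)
The plan is to build $\mathcal{M}$ as the composition of three successive reversible simulations, starting from $\mathcal{T}$ and descending to two counters. The first step is to replace the given deterministic Turing machine $\mathcal{T}$ by a \emph{reversible} deterministic Turing machine $\mathcal{T}'$ computing the same function, via Bennett's history--tape construction: $\mathcal{T}'$ works on three tracks (input, history, output), it first runs $\mathcal{T}$ forward while recording on the history track the quadruple used at each step, then copies the output onto the output track, and finally runs the first phase backwards to erase the history, leaving only input and output. At every step the last recorded history symbol together with the current state determines the unique preceding configuration, so $\mathcal{T}'$ has the property that every configuration has at most one predecessor. This step is classical and I would only recall it.

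Second, I would simulate the reversible $k$-tape machine $\mathcal{T}'$ by a reversible counter machine $\mathcal{C}$ with $2k$ counters. Each tape over an alphabet of size $m$ is encoded by a pair of naturals: the reversed contents to the left of the head read as a base-$m$ integer, and the contents from the head rightwards read as a base-$m$ integer. A single move of $\mathcal{T}'$ (a read/write and a head shift) then translates into a bounded combination of the register operations ``multiply by $m$'', ``integer-divide by $m$'', ``add $c$'', ``subtract $c$'', and ``test the residue modulo $m$''. The crux is to implement each such macro \emph{reversibly} on a counter machine: multiplication by $m$ is a loop transferring one unit of the source into $m$ units of a fresh scratch register, traversable backwards because the loop index is recoverable from the two register values at any moment; division is its reverse, with the residue held in the finite control; and so on. Folding the reversibility of $\mathcal{T}'$ through this translation, and never introducing a finite-control state with two incoming transitions agreeing on both label and net effect, produces a reversible $CM(2k)$.

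Third, I would compress the $2k$ counters of $\mathcal{C}$ into two, using Minsky's prime-power coding: the tuple $(c_1,\ldots,c_{2k})$ is stored as $N=p_1^{c_1}\cdots p_{2k}^{c_{2k}}$ in the first counter, with the second counter as scratch. Incrementing $c_i$ becomes ``multiply $N$ by $p_i$'', decrementing becomes ``divide $N$ by $p_i$'', and testing $c_i=0$ becomes ``test whether $p_i \nmid N$'' --- exactly the reversible multiply / divide / residue macros already built, now running on two counters. Once more one checks that sequencing these macros creates no irreversible merge, by keeping the macros garbage-free and by tagging the finite control with enough data (which $\mathcal{C}$-instruction is being simulated and which sub-step of its expansion) that the predecessor configuration is always unique. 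The resulting deterministic reversible $CM(2)$ is the desired $\mathcal{M}$.

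The main obstacle throughout is \emph{preserving reversibility under composition}. A naive simulation of a multicounter machine by a $CM(2)$ is riddled with unconditional jumps and state-merging ``goto'' points that leave many configurations with several predecessors, violating the requirement that no two quadruples of $\delta$ overlap in range. The real work is therefore in the design of the arithmetic macros so that each is individually reversible and leaves no garbage, and in verifying that the control-flow graph of the assembled machine, with instruction-and-substep information absorbed into the state set $Q$, never has two edges entering a state with the same label and the same effect on the counters --- equivalently, that every ID of $\mathcal{M}$ has at most one predecessor. Granting the classical reversibility of Bennett's construction, this bookkeeping is precisely Morita's contribution, and it is the part I expect to be delicate rather than conceptually deep.
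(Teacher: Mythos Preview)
The paper does not prove this statement at all: Theorem~\ref{theo:morita} is quoted from Morita's paper \cite{Morita} and used as a black box, with no argument given. There is therefore nothing in the paper to compare your proposal against.

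That said, your sketch is a faithful outline of how Morita's proof actually goes --- Bennett's reversibilization of $\mathcal{T}$, reversible simulation of a multi-tape reversible Turing machine by a multi-counter machine, and then reversible compression of many counters into two via G\"odel/prime-power encoding --- and you correctly identify that the only nontrivial content is keeping reversibility across each reduction, in particular designing the arithmetic macros (multiply/divide by a constant, residue test) so that they are individually reversible and garbage-free and so that the assembled control graph never has two quadruples overlapping in range. If you intend to include a proof, what you have is an adequate roadmap; the parts you label ``delicate rather than conceptually deep'' are exactly where the work lies, and you would need to spell out at least one of the reversible macros in full and state precisely the invariant on the finite control that guarantees unique predecessors. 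For the purposes of the present paper, however, a citation suffices.
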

\begin{rem}
  A 2-counter machine $\mathcal{M}=(Q,\delta,\iota,f)$ can be
  sketched as a labeled graph $\mathrsfs{G}(\mathcal{M})$ with set of
  vertices $Q$ and labeled edges $\delta$ where we picture
  $(q_1,i,h,q_2)\in\delta$ as an arrow from $q_1$ to $q_2$ labelled by
  $i,h$ with $i\in\{1,2\}$ and $h\in\{a,\perp,+,0,-\}$.
  It is clear from the definition that if $\mathcal{M}$ is
  deterministic then the only case where a vertex $q$ of
  $\mathrsfs{G}(\mathcal{M})$ may have two outgoing edges is when we have
  tests instructions (Fig. \ref{fig1}),
  i.e. $(q,i,a,q_1),(q,i,\perp,q_2)$ are two edges of
  $\mathrsfs{G}(\mathcal{M})$ with $i\in\{1,2\}$. Dually, if
  $\mathcal{M}$ is reversible then the only case where a vertex $q$ of
  $\mathrsfs{G}(\mathcal{M})$ may have two ingoing edges is when they
  are of the form $(q_1,i,a,q),(q_2,i,\perp,q)$ for some $i\in\{1,2\}$
  (Fig. \ref{fig1}).
\end{rem}
\begin{figure}
 \begin{center}
 \includegraphics[scale=0.7]{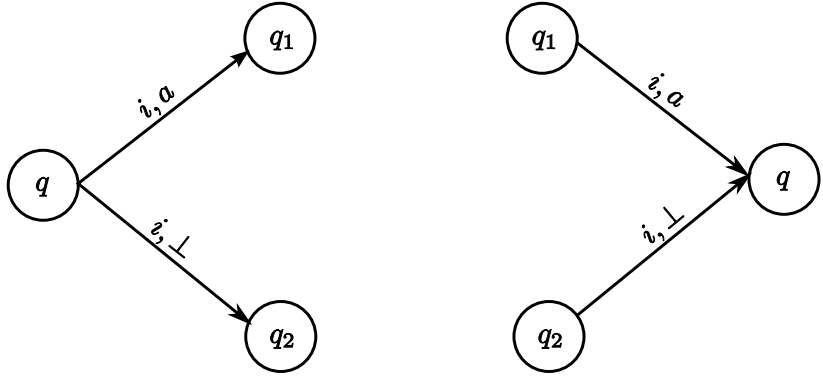}
 \end{center}
  \caption{The deterministic case (on the left) and the reversible case
    (on the right).}\label{fig1}
\end{figure}
For our purpose we are interested in deterministic 2-counter
machines with the following property.
\begin{defn}[alternating $2$-counter machine]
  Let $\mathcal{M}=(Q,\delta,\iota,f)$ be a deterministic $CM(2)$.
  We say that $\mathcal{M}$ is alternating if, for all pairs of
  different instructions $(q,i,h,q'),(q',j,h',q'')\in\delta$, we have
  $j= 3-i$.
\end{defn}
We can prove the following proposition:
\begin{prop}\label{alter}
  Let $\mathcal{M}=(Q,\delta,\iota,f)$ be a deterministic reversible
  $CM(2)$. Then there is a deterministic reversible and alternating
  $CM(2)$ $\mathcal{M}'$ that simulates $\mathcal{M}$.
\end{prop}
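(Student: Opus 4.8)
The plan is to build $\mathcal{M}'$ from $\mathcal{M}$ by inserting \emph{idle moves} --- instructions $(q,i,0,q')$ acting on the counter that is momentarily not in use --- so as to force consecutive instructions of a computation to address alternate counters, while taking care not to destroy determinism or reversibility. First I record the local shape of $\mathrsfs{G}(\mathcal{M})$: since $\mathcal{M}$ is deterministic and reversible, by the Remark every vertex has out-degree at most $2$, with out-degree $2$ occurring only at a test pair $(q,i,\perp,q_1),(q,i,a,q_2)$, and dually in-degree at most $2$, occurring only at a pair $(q_1,i,a,q),(q_2,i,\perp,q)$ (a third outgoing or incoming instruction would overlap in domain, resp.\ range, with one of them). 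Hence every vertex $q$ that is not a dead end (i.e.\ has an outgoing instruction) has a well-defined index $c(q)\in\{1,2\}$ addressed by all of its outgoing instructions, and every vertex of in-degree $2$ has a well-defined index addressed by both of its incoming instructions; and being alternating is equivalent to requiring, for every instruction $(u,i,h,v)$ with $v$ not a dead end, that $c(v)=3-i$.

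\textbf{Stage 1 (repairing convergences).} For each vertex $q$ of in-degree $2$ whose two incoming instructions address counter $i$, such that $q$ has an outgoing instruction and $c(q)=i$, I add a fresh vertex $\widehat q$, replace every outgoing instruction $(q,j,h,p)$ of $q$ by $(\widehat q,j,h,p)$, and add the idle move $(q,3-i,0,\widehat q)$. The resulting $\mathcal{M}_1$ is still deterministic and reversible, because the new instruction is the unique one leaving $q$ and the unique one entering $\widehat q$, and $\widehat q$ has in-degree $1$; and it simulates $\mathcal{M}$, since an idle move changes no counter position and no vertex of $Q$ is removed, so the run of $\mathcal{M}_1$ from $(\iota,n_1,n_2)$ passes through $f$ exactly when that of $\mathcal{M}$ does. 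The point of the construction is that in $\mathcal{M}_1$ every vertex of in-degree $2$ is now either a dead end or addresses the counter \emph{opposite} to its incoming one.

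\textbf{Stage 2 (forcing alternation).} In $\mathcal{M}_1$, for every instruction $(u,i,h,v)$ with $v$ not a dead end and $c(v)=i$, I add a fresh vertex $s$, replace the instruction by $(u,i,h,s)$, and add the idle move $(s,3-i,0,v)$; call the outcome $\mathcal{M}'$. Determinism is preserved because instruction domains never change and each new vertex has at most one outgoing instruction (a relocated test pair is still a test pair); reversibility is preserved because, by the property secured in Stage 1, no incoming instruction of an in-degree-$2$ vertex is ever subdivided, so each new vertex keeps in-degree $1$ and each in-degree-$2$ vertex keeps its original pair of incoming instructions; and $\mathcal{M}'$ simulates $\mathcal{M}$ for the same reason as in Stage 1. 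Finally $\mathcal{M}'$ is alternating: for an instruction $(u,i,h,v)$ of $\mathcal{M}'$ with $v$ not a dead end, the equality $c(v)=3-i$ is immediate when $(u,i,h,v)$ is a freshly inserted idle move, and otherwise it is exactly the failure of the Stage 2 subdivision test applied to that instruction, which failed precisely because $c(v)\neq i$.

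The step I expect to be the main obstacle is Stage 1. The naive idea --- simply subdividing every offending edge by one idle move --- breaks down at the in-degree-$2$ vertices: the two incoming instructions would be replaced by idle moves $(s_1,3-i,0,q)$ and $(s_2,3-i,0,q)$ which overlap in range (their middle symbol lies in $D$), destroying reversibility; and lengthening the subdivisions does not help, since in both branches the last instruction into $q$ is forced to be an idle move on one and the same counter. Pushing the outgoing part of such a vertex $q$ behind a single idle move, as in Stage 1, is exactly what renders the Stage 2 subdivision harmless, and the remainder of the argument is the routine --- if somewhat tedious --- verification that determinism and reversibility survive both stages and that the inserted idle moves change neither the accepted $k$-tuples nor, via $f$, the halting behaviour.
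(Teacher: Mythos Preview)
Your proof is correct, and the underlying idea---inserting dummy states carrying idle moves $(\,\cdot\,,3-i,0,\,\cdot\,)$ on the opposite counter---is precisely the paper's. The difference is organisational. The paper handles every ``bad'' vertex $q$ (one whose incoming and outgoing instructions both address counter $i$) uniformly in a single pass: it moves the entire outgoing block of $q$ to a fresh dummy state $\widehat q$, linked by the single idle move $(q,3-i,0,\widehat q)$. This keeps determinism (now $q$ has exactly one outgoing instruction), reversibility ($\widehat q$ has in-degree $1$, and the former targets $r_k$ merely see their incoming edge change source from $q$ to $\widehat q$ without changing counter or symbol), and alternation (both the pair $q\to\widehat q$ and the pair $\widehat q\to r_k$ alternate by construction), all at once. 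Your Stage~1 is exactly this manoeuvre restricted to the in-degree-$2$ bad vertices; your Stage~2 then falls back on edge subdivision for the rest. The two-stage split is forced only because you chose subdivision as the primary tool, and subdivision---as you correctly diagnose---would wreck reversibility at convergent test pairs. Pushing the outgoing side, as the paper does, sidesteps that obstacle entirely and makes the preprocessing stage unnecessary. Either way the argument goes through; the paper's version is just shorter.
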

\begin{proof}
  What we actually do is to add some dummy states whenever there occur
  two instructions $(p,i,h,q),(q,i,x_1,r_1)\in\delta$. Formally
  $\mathcal{M}'=(Q',\delta',\iota,f)$ is obtained
  applying iteratively the following procedure. Whenever
 $(p,i,h,q)$, $(q,i,x_1,r_1)\in\delta$, we substitute in
  $\delta$ all the instructions $(q,i,x_k,r_k)\in\delta$ (at most
  two since $\mathcal{M}$ is deterministic) by the pairs of
  instructions
  $$
  (q,3-i,0,q_{(q,i,x_k,r_k)}), (q_{(q,i,x_k,r_k)},i,x_k,r_k)
  $$
  where $q_{(q,i,x_k,r_k)}$ is the dummy state added. It is easy to
  see that the obtained $CM(2)$ $\mathcal{M}'$ is an alternating
  deterministic and reversible $CM(2)$ that simulates $\mathcal{M}$.
\end{proof}

Note that we can always replace an instruction $(p,i,0,q)$ by the
couple of instructions $(p,i,a,q), (p,i,\perp,q)$ and on doing so the
$CM(2)$ remains deterministic, reversible and alternating. Therefore,
we shall assume henceforth that our $CM(2)$ have no instructions of
type $(p,i,0,q)$. A $CM(2)$ which is deterministic, reversible, alternating and has no instructions of the form type $(p,i,0,q)$ is said to be {\em normalized}. Taking in particular a universal Turing machine in Theorem \ref{theo:morita} and being undecidable whether or not a universal Turing machine can accept a given input, it follows that there exists a $CM(2)$
$\mathcal{M}_0$ such that it is undecidable whether or not a given
$(m,n) \in \mathbb{N}^2$ is accepted by $\mathcal{M}_0$. Therefore by
Proposition \ref{alter} we have the following Corollary.
\begin{cor}\label{cor:universal}
  There exists a normalized $CM(2)$ $\mathcal{M}^*$ such that it is
  undecidable whether or not a given $(m,n) \in \mathbb{N}^2$ is
  accepted by $\mathcal{M}^*$.
\end{cor}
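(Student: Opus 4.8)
The plan is to chain together the three tools already at hand --- Morita's Theorem \ref{theo:morita}, Proposition \ref{alter}, and the $0$-shift elimination noted just above the statement --- checking at each step that the acceptance relation is transported in a way that preserves undecidability. First I would fix a deterministic universal Turing machine $\mathcal{T}$, so that it is undecidable whether $\mathcal{T}$ accepts a given input word $w$. Applying Theorem \ref{theo:morita} to $\mathcal{T}$ produces a deterministic reversible $CM(2)$ $\mathcal{M}$ that simulates $\mathcal{T}$; unwinding what ``simulates'' means, there is a computable encoding $w \mapsto (m_w, n_w) \in \mathbb{N}^2$ of inputs as initial counter contents such that $\mathcal{T}$ accepts $w$ if and only if $\mathcal{M}$ accepts $(m_w, n_w)$. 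Consequently the set of pairs accepted by $\mathcal{M}$ is not recursive, and we may take $\mathcal{M}_0 := \mathcal{M}$, which is exactly the machine announced in the paragraph preceding the statement.

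Next I would apply Proposition \ref{alter} to $\mathcal{M}_0$, obtaining a deterministic reversible and alternating $CM(2)$ $\mathcal{M}'$ that simulates $\mathcal{M}_0$. Here the simulation is essentially trivial at the level of counters: the construction in the proof of Proposition \ref{alter} only inserts dummy states between consecutive instructions operating on the same tape and performs a single no-shift move on the other tape at such a dummy state, so it changes neither the counter contents reached along a computation nor the initial and final states $\iota, f$. Thus $(\iota, m, n) \vdash_{\mathcal{M}_0}^{*} (f, m', n')$ holds for some $(m',n')$ exactly when $(\iota, m, n) \vdash_{\mathcal{M}'}^{*} (f, m'', n'')$ holds for some $(m'', n'')$, and hence $(m,n)$ is accepted by $\mathcal{M}_0$ if and only if it is accepted by $\mathcal{M}'$; the acceptance problem for $\mathcal{M}'$ is therefore again undecidable.

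Finally I would remove the remaining no-shift instructions: replace each $(p,i,0,q) \in \delta'$ by the pair $(p,i,a,q),(p,i,\perp,q)$, and call the result $\mathcal{M}^*$. As already observed, this keeps the machine deterministic, reversible and alternating, so $\mathcal{M}^*$ is normalized; one checks directly that it also preserves the transition relation on instantaneous descriptions, since from an ID $(p, n_1, n_2)$ exactly one of the two new test instructions is applicable --- according to whether the $i$th head reads $a$ (i.e.\ $n_i>0$) or $\perp$ (i.e.\ $n_i=0$) --- and it produces the same successor ID $(q, n_1, n_2)$ that $(p,i,0,q)$ would. Therefore $\mathcal{M}^*$ and $\mathcal{M}'$ accept precisely the same pairs, and it is undecidable whether a given $(m,n) \in \mathbb{N}^2$ is accepted by $\mathcal{M}^*$, which is the assertion of Corollary \ref{cor:universal}.

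The step I expect to carry all the weight is the first one: everything hinges on the fact that ``simulates'' in Theorem \ref{theo:morita} comes with a \emph{computable, two-sided} translation of acceptance. The two subsequent transformations act as the identity on $\mathbb{N}^2$ and merely enlarge the state set, so they pass undecidability along verbatim and present no real obstacle; the only genuine care needed is the routine bookkeeping that the composite translation of inputs remains computable and that acceptance is reflected as well as preserved at each stage.
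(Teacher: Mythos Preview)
Your proposal is correct and follows exactly the route the paper takes: the paper's argument is the paragraph immediately preceding the corollary, which chains a universal Turing machine through Theorem~\ref{theo:morita}, then Proposition~\ref{alter}, and then the $0$-shift elimination, just as you do. You have simply fleshed out the bookkeeping (computability of the input encoding, preservation of acceptance under the dummy-state insertion and the $0$-shift replacement) that the paper leaves implicit.
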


\section{Amalgam associated to a $2$-counter machine}
In this section we associate an amalgam $[S_1,S_2;U]$ of inverse
semigroups to a deterministic reversible alternating $2$-counter
machine
$$
\mathcal{M}=(Q,\delta,\iota,f)
$$
The rough idea is depicted in Fig. \ref{fig2}: we encode the two tapes
of the machine $\mathcal{M}$ by two inverse semigroups $S_1, S_2$ and the
control of $\mathcal{M}$ is handled through a common inverse subsemigroup
$U$.
\begin{figure}
 \begin{center}
 \includegraphics[scale=0.6]{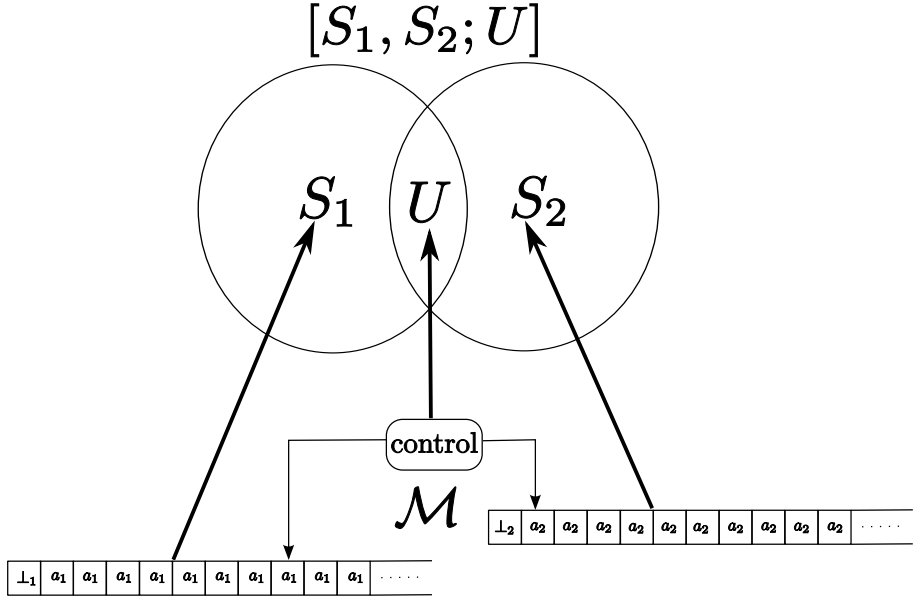}
 \end{center}
  \caption{The rough idea of the encoding.}\label{fig2}
\end{figure}
\\
We start by associating to $\mathcal{M}$ two inverse semigroups $S_1, S_2$,
representing the two tapes, called respectively the \emph{left tape
  inverse semigroup} and the \emph{right tape inverse semigroup} of
$\mathcal{M}$.
\begin{defn}[left and right tape inverse semigroups]\label{def:tape
    inverse semigroup}
Let $\mathcal{M}=(Q,\delta,\iota,f)$ be a deterministic reversible
alternating $2$-counter machine. The left and right tape inverse
semigroups associated to $\mathcal{M}$ are the inverse semigroups
$S_i$ $(i= 1,2)$ presented by Inv$\la X_i \mid \mathcal{T}_i\ra$, where:
$$
X_i=\{\perp_i, a_i,t_i\} \cup \{q_i:q\in Q\},
$$
$$
\mathcal{T}_i=\mathcal{T}_i^c\cup\mathcal{T}_i^t\cup\mathcal{T}_i^w
\cup\mathcal{T}_i^e\cup\mathcal{T}_i^f
$$
and:
\begin{itemize}
\item
$\mathcal{T}_i^c$ are the commuting relations, used to keep track of
the instantaneous description of the machine;
\item
$\mathcal{T}_i^t$ are the test relations (corresponding
to the test instructions);
\item
$\mathcal{T}_i^{w}$ are the writing relations (corresponding
to instructions that move the head of the $i$th tape to the right);
\item
$\mathcal{T}_i^{e}$ are the erasing relations (corresponding
to instructions that move the head of the $i$th tape to the left);
\item
$\mathcal{T}_i^f$ are the finiteness relations (designed to enforce some
finiteness properties on the semigroup $S_i$).
\end{itemize}
More precisely the relations are defined in the following way:
\begin{itemize}
\item[(i)]
$\mathcal{T}_i^d$ consists of all the relations of the form $t_ix
=xt_i$, for $x\in\{a_i,a_i\inv,\perp_i,\perp_i\inv \}$ and $i = 1,2$.
\item[(ii)]
For each test instruction $(p,1,s,q)\in\delta$
$(s\in A)$, we add to $\mathcal{T}^t_1$ the relation
$sp_1=st_1q_1t_1^{-1}$.
\item[(iii)]
For each test instruction $(p,2,s,q)\in\delta$
$(s\in A)$, we add to $\mathcal{T}^t_2$ the relation
$p_2s=t_2q_2t_2^{-1}s$.
\item[(iv)]
For each right move instruction $(p,1,+,q)\in\delta$, we add to
$\mathcal{T}^w_1$
the relations $sp_1=st_1a_1q_1t_1^{-1}$ $(s\in A)$.
\item[(v)]
For each right move instruction $(p,2,+,q)\in\delta$, we add to
$\mathcal{T}^w_2$
the relations $p_2s=t_2q_2a_2t_2^{-1}s$ $(s\in A)$.
\item[(vi)]
For each left move instruction $(p,1,-,q)\in\delta$, we add to
$\mathcal{T}^w_1$
the relations $sa_1p_1=st_1q_1t_1^{-1}$ $(s\in A)$.
\item[(vii)]
For each left move instruction $(p,2,-,q)\in\delta$, we add to
$\mathcal{T}^w_2$
the relations $p_2a_2s=t_2q_2t_2^{-1}s$ $(s\in A)$.
\item[(viii)] The set of relations $\mathcal{T}_1^f$ is formed by
$$\begin{array}{l}
f_1x = xf_1 = f_1 \quad (x \in \wt{X_1}),\\
q_1x = f_1  \quad (q \in Q, \; x \in \wt{X_1}\setminus \{
q_1\inv\}),\\
p_1\inv q_1 = f_1 \quad (p,q \in Q \mbox{ distinct}),\\
a_1\perp_1\inv = \perp_1\inv a_1 = f_1.
\end{array}$$
\item[(ix)] The set of relations $\mathcal{T}_2^f$ is formed by
$$\begin{array}{l}
f_2x = xf_2 = f_2 \quad (x \in \wt{X_2}),\\
xq_2 = f_2  \quad (q \in Q, \; x \in \wt{X_2}\setminus \{
q_2\inv\}),\\
p_2 q_2\inv = f_2 \quad (p,q \in Q \mbox{ distinct}),\\
a_2\perp_2\inv = \perp_2\inv a_2 = f_2.
\end{array}$$
\end{itemize}
\end{defn}

\begin{rem}\label{rem:one to one encoding}
Clearly $f_i$ is a zero element for the inverse semigroup $S_i$.
Note also that in cases (iv)-(vii) we have added two relations for a
single move to
force $a_i$ or $\perp_i$ to be present on both sides of each
relation. This is done for reasons
which will become clear when we shall introduce the inverse semigroup
$U$. Indeed, this way the embeddability of $U$ into the tape inverse
semigroups is verified since we avoid any nontrivial relations having
only generators of $U$ in one side. On the other hand, each test
instruction gives birth to a single relation (cases (ii)-(iii)).
\end{rem}

We associate to $\mathcal{M}$ another inverse semigroup $U$ which
represents its control unit:

\begin{defn}[core inverse semigroup]\label{def:core inverse semigroup}
  Let $\mathcal{M}=(Q,\delta,\iota,f)$ be a deterministic reversible
  alternating $2$-counter machine. The core inverse semigroup
  of $\mathcal{M}$ is the inverse semigroup $U$ presented by Inv$\la
  X_U\mid \mathcal{T}_U\ra$, where $X_U = Q \cup \{ t \}$ and the
set of relations $\mathcal{T}_U$ is formed by
$$\begin{array}{l}
fx = xf = f \quad (x \in \wt{X_U}),\\
pq = f  \quad (p,q \in Q),\\
pq\inv = p\inv q = f \quad (p,q \in Q \mbox{ distinct}).
\end{array}$$
\end{defn}
Similarly to the previous cases, $U$ has a zero $f$. With the notation of Definitions \ref{def:tape inverse semigroup} and \ref{def:core inverse semigroup} we can prove the following result:
\begin{prop}\label{prop:embedding property}
  Let $\mathcal{M}$ be a deterministic reversible alternating
  $2$-counter machine and let $S_1,S_2,U$ be respectively the
  left-right tape inverse semigroups and the core inverse semigroup
  of $\mathcal{M}$. The map $\omega_i$ defined by
  $$
  \omega_i(t)=t_i,\;\omega_i(q)=q_i \; (q \in Q)
  $$
  can be extended to a monomorphism $\omega_i:U\hookrightarrow S_i$
  for $i=1,2$.
\end{prop}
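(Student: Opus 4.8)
The plan is, for each $i\in\{1,2\}$, to exhibit a homomorphism $\pi_i:S_i\rf U$ that is a left inverse of $\omega_i$; once such a retraction is available, $\omega_i$ is automatically injective, and the verification that $\omega_i$ is a well-defined homomorphism is a short routine check. In other words, $U$ will turn out to be a retract of each tape inverse semigroup.

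First I would check that $\omega_i$ extends to a homomorphism $U\rf S_i$. By the defining property of the presentation Inv$\la X_U\mid\mathcal{T}_U\ra$ it suffices to show that every relation of $\mathcal{T}_U$ is satisfied in $S_i$ after $t$ is replaced by $t_i$ and each state letter $q$ by $q_i$ (so that $f\in Q$ is replaced by $f_i$, the zero of $S_i$). The relations $fx=xf=f$ then reduce to the zero laws for $f_i$ contained in $\mathcal{T}_i^f$, while $pq=f$ and $pq\inv=p\inv q=f$ become, up to renaming, instances of the relations $q_ix=f_i$ and $p_i\inv q_i=f_i$ of $\mathcal{T}_i^f$; since homomorphisms of inverse semigroups preserve the involution there is nothing further to check. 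This yields the homomorphism $\omega_i:U\rf S_i$.

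For injectivity I would define $\pi_i$ on $\wt{X_i}$ by $\pi_i(t_i)=t$, $\pi_i(q_i)=q$ for $q\in Q$, and --- this is the crucial choice --- by sending both ``tape'' letters to the zero of $U$: $\pi_i(a_i)=\pi_i(\perp_i)=f$, with $\pi_i(x\inv)=\pi_i(x)\inv$ (consistent, since $f\inv=f$ in $U$). Extending this to an involution-preserving map $\wt{X_i}^+\rf U$, I must check that it is constant on the classes of $\tau_i$, i.e.\ that it is compatible with the Vagner relations and with every relation of $\mathcal{T}_i$. Compatibility with the Vagner relations is automatic, since $U$ is already inverse and the map preserves the involution. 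Compatibility with $\mathcal{T}_i$ is exactly where the shape of the presentation pays off, as anticipated in Remark \ref{rem:one to one encoding}: every relation of $\mathcal{T}_i^c$, $\mathcal{T}_i^t$, $\mathcal{T}_i^w$ and $\mathcal{T}_i^e$ carries a tape letter $a_i$ or $\perp_i$ on both sides, so under $\pi_i$ both sides collapse to the zero $f$ and the relation becomes the trivial identity $f=f$; and each relation of $\mathcal{T}_i^f$ is sent by $\pi_i$ to a relation that already holds in $U$ --- the zero laws for $f_i$ and the relation $a_i\perp_i\inv=\perp_i\inv a_i=f_i$ are absorbed by the zero of $U$, while $q_ix=f_i$ and $p_i\inv q_i=f_i$ reproduce the defining relations of $\mathcal{T}_U$. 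Hence $\pi_i$ descends to a homomorphism $S_i\rf U$.

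Finally, $\pi_i\circ\omega_i$ and $\mathrm{id}_U$ are homomorphisms $U\rf U$ that agree on the generating set $X_U=Q\cup\{t\}$, whence $\pi_i\circ\omega_i=\mathrm{id}_U$ and $\omega_i$ is injective; the case $i=2$ is the mirror image of the case $i=1$, with left and right interchanged. The hard part --- really the only nontrivial part --- will be the bookkeeping in the third paragraph: one must check that reading the list $\mathcal{T}_i^f$ through $\pi_i$ forces in $U$ neither more nor fewer identifications than $\mathcal{T}_U$ does, which is precisely the reason why each defining relation of the tape inverse semigroups was arranged so as to display a tape letter on both sides.
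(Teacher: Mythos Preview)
Your retraction $\pi_i$ is not well-defined, and in fact no retraction $S_i\to U$ splitting $\omega_i$ can exist. The problem lies precisely in the part you flagged as ``the only nontrivial part'': reading the list $\mathcal{T}_i^f$ through $\pi_i$ \emph{does} force more identifications in $U$ than $\mathcal{T}_U$ provides.

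Take $i=1$. Among the relations of $\mathcal{T}_1^f$ is $q_1x=f_1$ for every $x\in\wt{X_1}\setminus\{q_1^{-1}\}$; in particular $q_1t_1=f_1$ and $q_1t_1^{-1}=f_1$. Under your map $\pi_1$ these become $qt=f$ and $qt^{-1}=f$ in $U$. But $\mathcal{T}_U$ contains only $pq=f$ and $pq^{-1}=p^{-1}q=f$ with \emph{both} letters in $Q$, together with the zero-absorption laws; there is no relation tying a state letter to $t$. Concretely, the Munn tree of $qt$ (for $q\neq f$) admits no $\mathcal{T}_U$-expansion, so $qt\neq f$ in $U$. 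Hence $\pi_1$ does not respect $q_1t_1=f_1$ and fails to descend to $S_1$. The case $i=2$ is symmetric via $t_2q_2=f_2$.

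More is true: any homomorphism $\pi:S_1\to U$ with $\pi\circ\omega_1=\mathrm{id}_U$ must send $q_1\mapsto q$ and $t_1\mapsto t$, whence
\[
f \;=\; qq \;=\; \pi(q_1q_1) \;=\; \pi(f_1) \;=\; \pi(q_1t_1) \;=\; qt,
\]
contradicting $qt\neq f$ in $U$. So the retraction strategy cannot be repaired without changing $U$. The paper instead argues directly: if $\omega_i(u)=\omega_i(v)$ is nonzero, then no relation outside $\mathcal{T}_i^f$ is applicable (each involves $a_i$ or $\perp_i$), and using one from $\mathcal{T}_i^f$ would collapse the element to $f_i$; hence $\omega_i(u)$ and $\omega_i(v)$ already coincide in $FIS(X_i)$, so $u=v$ in $U$. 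The zero case is handled by observing that the only $\mathcal{T}_i$-relations over the $U$-letters are (up to subscript) those of $\mathcal{T}_U$.
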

\begin{proof}
Let $u,v \in \wt{X_U}^+$ and assume that $\omega_i(u) = \omega_i(v)$
in $S_i$. Suppose first that  $\omega_i(u)$ is not the zero element of
$S_i$. Then we are forbidden to use relations from $\mathcal{T}_i^f$ to
transform  $\omega_i(u)$ into $\omega_i(v)$. Since all the other
relations of $\mathcal{T}_i$ involve $a_i$ or $\perp_i$, it follows
that $\omega_i(v)$ must actually equal $\omega_i(u)$ in the free
inverse semigroup over $X_U$, and so we definitely have $u = v$ in
$U$. Hence we may assume that $\omega_i(u) = f_i$ in $S_i$. But the unique
relations in $\mathcal{T}_i$ where neither $a_i$ nor $\perp_i$ occur
are precisely (up to subscript $i$) those of $\mathcal{T}_U$, hence we
may apply one of the latter relations to $u$ (possibly after
some Vagner congruence relations) to get an occurrence of $f$.
Thus $u = f$ in $U$. Similarly, also $v = f$ in $U$ and so $u = v$
holds in $U$ as required.
\end{proof}

In view of Proposition \ref{prop:embedding property} we can associate
to a deterministic reversible alternating $2$-counter machine an
amalgam:

\begin{defn}\label{defn:amalgam associate to M}
  Let $\mathcal{M}=(Q,\delta,\iota,f)$ be a deterministic reversible
  alternating $2$-counter machine. The amalgam of inverse
  semigroups associated to $\mathcal{M}$ is the $5$-tuple
  $[S_1,S_2;U,\omega_1,\omega_2]$ where $S_1, S_2$ are the left-right
  tape inverse semigroups of $\mathcal{M}$, $U$ is the core
  inverse semigroup of $\mathcal{M}$ and
  $\omega_i:U\hookrightarrow S_i$ are the embeddings of Proposition
  \ref{prop:embedding property}. In this way the free product with
  amalgamation of the amalgam $[S_1,S_2;U,\omega_1,\omega_2]$
  associated to $\mathcal{M}$ can be presented by
  $$\mbox{Inv}
  \la X_1\cup X_2\mid \mathcal{T}_1\cup\mathcal{T}_2\cup\mathcal{T}_3\ra
  $$
  where
  $$
  \mathcal{T}_3=\{(q_1,q_2):q\in Q\}\cup\{(t_1,t_2)\}
  $$
\end{defn}

The left-right tape inverse semigroups $S_i$ have the following
important property:

\begin{prop}
\label{finiter}
  Let $\mathcal{M}$ be a deterministic reversible alternating
  $2$-counter machine and let $S_1,S_2$ be respectively the left-right
  tape inverse semigroups of $\mathcal{M}$. Then the Green
  $\mathcal{R}$-classes of $S_i$ are finite for $i = 1,2$.
\end{prop}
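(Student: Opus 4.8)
The plan is to analyze the Schützenberger graphs $S\Gamma(X_i,\mathcal{T}_i;w)$ directly and show that each $\mathcal{R}$-class is finite. I would argue as follows. Fix $i$ (say $i=1$; the case $i=2$ is symmetric with the side on which generators act reversed). Since $f_i$ is a zero, its $\mathcal{R}$-class is a singleton, so assume $w\tau_i\neq f_i$. The first step is to understand which reduced words can represent a nonzero element of $S_1$. Because every relation in $\mathcal{T}_1^t\cup\mathcal{T}_1^w\cup\mathcal{T}_1^e$ carries an $a_1$ or a $\perp_1$ on both sides, and because the relations in $\mathcal{T}_1^f$ collapse to $f_1$ every product that contains a state letter $q_1$ followed by anything other than $q_1\inv$, followed by the relations $p_1\inv q_1=f_1$ and $a_1\perp_1\inv=\perp_1\inv a_1=f_1$, a nonzero element of $S_1$ is very constrained: read along any path in the Schützenberger graph, a state letter $q_1$ can only be immediately followed (in the positive direction) by $q_1\inv$, and similarly the $a_1,\perp_1$ block must look like a prefix of the tape content $\perp_1 a_1 a_1\cdots$. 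The commuting relations $\mathcal{T}_1^c$ (i.e.\ $t_1x=xt_1$ for $x\in\{a_1^{\pm1},\perp_1^{\pm1}\}$) only let the marker $t_1$ slide past the tape letters, which does not create new vertices beyond a bounded combinatorial pattern.

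Concretely, I would show that the Schützenberger automaton of a nonzero element has underlying graph of a uniformly bounded shape: a finite ``tape segment'' path labelled by a prefix of $\perp_1 a_1 a_1\cdots a_1$ (of length $n+1$, where $n$ is the head position encoded by $w$), with at most one pendant edge labelled by a state letter $q_1$ attached at the head vertex, together with finitely many $t_1$-edges running parallel to the tape edges (because $t_1$ commutes with $a_1$ and $\perp_1$, the $t_1$-transition is defined at every vertex of the tape segment and lands in an isomorphic ``shifted copy'', but the determinism/foldings force these copies to coincide up to the bounded pattern). The key point is that $n$ is determined by $w$: applying a writing relation increases the tape length by exactly one, an erasing relation decreases it by one, and test relations leave it unchanged, so the length of the tape segment in $\mathcal{A}(X_1,\mathcal{T}_1;w)$ is a function of $w$ alone, hence finite. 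Once the graph is finite, the $\mathcal{R}$-class — which is exactly the vertex set of $S\Gamma(X_1,\mathcal{T}_1;w)$ — is finite.

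The cleanest way to make this rigorous is probably to run Stephen's iterative procedure ($\mathcal{T}_1$-expansions and foldings) starting from the Munn tree of $w$ and prove by induction that every finite approximation $\mathcal{A}_j$ either is already $\mathcal{T}_1$-closed or, after one more round, has not grown: every $\mathcal{T}_1^f$-expansion immediately folds a whole branch into the zero-component (which one discards when restricting to a nonzero $\mathcal{R}$-class), every $\mathcal{T}_1^c$-expansion only adds $t_1$-edges that fold onto existing ones, and every $\mathcal{T}_1^t,\mathcal{T}_1^w,\mathcal{T}_1^e$-expansion rewrites the state-letter pendant and shifts the head vertex by at most one along the tape segment while keeping its total length fixed. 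Thus the sequence $\mathcal{A}_1\to\mathcal{A}_2\to\cdots$ stabilizes after finitely many steps at a finite automaton, whose colimit is $\mathcal{A}(X_1,\mathcal{T}_1;w)$ itself.

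I expect the main obstacle to be the bookkeeping of the $t_1$-edges and the foldings they induce: one must check that the commuting relations, interacting with the reversibility/determinism built into $\mathcal{M}$ (which forbids the dangerous configurations the $\mathcal{T}_1^f$ relations would otherwise have to absorb), do not spawn an unbounded cascade of new vertices — e.g.\ that a $t_1$-edge at the far end of the tape segment does not force an extension of the segment. Handling this carefully, and in particular formalizing the invariant ``the graph of a nonzero element is a tape segment of length $\le n(w)+1$ with a bounded decoration,'' is where the real work lies; the reduction of everything else to the zero element via $\mathcal{T}_i^f$ is routine.
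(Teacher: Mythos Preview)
Your proposal has a genuine gap: the claim that the Stephen sequence stabilizes is not supported by the argument you sketch, and the mental picture underlying it is not correct. An expansion does not ``rewrite the state-letter pendant'': it \emph{adds} a new path while keeping the old $p_1$-edge in place. Applying a relation such as $sp_1 = st_1a_1q_1t_1^{-1}$ creates a fresh $q_1$-edge at a new $t_1$-level, together with new $t_1$- and $a_1$-edges; the automaton genuinely grows, and there is now more than one $Q$-edge. So the \sch graph of a nonzero element is not ``a tape segment with at most one pendant state edge''; it is a stack of such segments, one per $Q$-expansion performed, linked by $t_1$-edges. The question is why this stack is finite, and you never say. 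The missing ingredient is a bound on the number of $Q$-edge expansions. In the paper this is obtained from the alternating hypothesis: writing $Q'$ for the states $p$ with some $(p,1,\ldots,\ldots)\in\delta$ and $Q''$ for the states $q$ with some $(\ldots,1,\ldots,q)\in\delta$, alternation gives $Q'\cap Q''=\emptyset$, so a $Q$-edge produced by an expansion (labelled from $Q''$) can never itself be the left side of a further $\mathcal{T}_1^t\cup\mathcal{T}_1^w\cup\mathcal{T}_1^e$-expansion; together with determinism and reversibility (and the forbidden factor $a_1\perp_1^{-1}$), each original $Q$-edge of the Munn tree fires at most once. Hence after finitely many steps only $\mathcal{T}_1^c$ remains active.

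The second missing piece is exactly the one you flag yourself: controlling the $\mathcal{T}_1^c$-closure. Your assertion that ``every $\mathcal{T}_1^c$-expansion only adds $t_1$-edges that fold onto existing ones'' is false in general --- the commuting relations build honest commutative squares and propagate a single $t_1$-edge into an entire parallel copy of a tape path. What bounds this propagation is an invariant you do not mention: for every loop label $u$ in any approximation one has $\sigma_t(u)=0$ (the signed $t_1$-count vanishes), since this holds in the Munn tree and is preserved by all non-$\mathcal{T}^f$ relations and by folding. From this one deduces $|\sigma_t(v)|<|V(\mathcal{A})|$ for every path label $v$, which caps the $t_1$-height of the stack and forces the $\mathcal{T}_1^c$-closure to be finite. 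Without these two steps --- the $Q$-edge count via alternation, and the $\sigma_t$-invariant for the commuting closure --- the stabilization claim does not go through.
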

\begin{proof}
First of all, let $\mathcal{M}'$ be the machine $\mathcal{M}$ with
counters 1 and 2
reversed. It is easy to check that $t_2 \to t_1\inv$, $q_2 \to q_1$,
$a_2 \to a_1$, $\perp_2 \to \perp_1$ induces an isomorphism from $S_2$
onto the dual of $S'_1$, hence it suffices to show that the Green
$\mathcal{R}$-classes of $S_1$ are finite. This is of course
equivalent to say that the \sch graphs of $S_1$ are finite. To
simplify notation, we drop all the subscripts 1 in $S_1$, $\mathcal{T}_1$,
$X_1$ and its letters.

Let $w \in \wt{X}^+$ and let $(\mathcal{A}_k)_k$ be the Stephen's
sequence whose colimit is
$\mathcal{A}(X,\mathcal{T};w)$. We can of course
assume that $w$ does not represent the zero element of $S$, which is
the single element of its $\mathcal{R}$-class. Let $m_0$ be the
total number of edges labeled by letters of $Q$ in the Munn tree of
$w$. We claim that there are at most $m_0$ expansions in the sequence
$(\mathcal{A}_k)_k$ featuring edges labeled by elements of $Q$
($Q$-edges).

Indeed, let $Q'$ consist of all $p \in Q$ such that there exists some
instruction $(p,1,\ldots, \ldots) \in \delta$, and let $Q''$ consist
of all $q \in
Q$ such that there exists some instruction $(\ldots,1,\ldots, q) \in
\delta$. Since $\mathcal{M}$ is alternating, we have $Q' \cap Q'' =
\emptyset$. Moreover, since $\mathcal{M}$ is deterministic
(respectively reversible), each element of $Q'$ (respectively $Q''$)
can feature a unique expansion: this is true for relations of type
(ii), which are the unique outcome of a test instruction, and even if
two test instructions $(p,1,a,q), (p,1,\perp,r)$ are present in
$\delta$, the corresponding relations can never be used simultaneously for the
same $p$-edge in view of the forbidden relations $a\perp\inv = f$ (we
are assuming that $w \neq f$ in $S$). Similarly, the same $Q$-edge
cannot feature two expansions involving relations of type (iv) or
(vi), where application of both relations arising from a single
instruction would ensure the appearance of a factor $a\perp\inv$. And
of course, the relations of (viii) are altogether forbidden since $w
\neq f$ in $S$.

Therefore there exists some $m_1 \in \mathbb{N}$ such that all expansions and
foldings involving $Q$-edges in $(\mathcal{A}_k)_k$ have taken place
before we reach $\mathcal{A}_{m_1}$.

Given $Y \subseteq X$, let $\sigma_Y:\wt{X}^* \to \mathbb{Z}$ be the
homomorphism defined by
$$\sigma_Y(x) = \left\{
\begin{array}{rl}
1&\mbox{ if }x \in Y\\
-1&\mbox{ if }x \in Y\inv \\
0&\mbox{ if }x \in \wt{X} \setminus \wt{Y}
\end{array}
\right.$$
We claim that
\begin{equation}
\label{tloop}
\mbox{If $u$ labels a loop in some $\mathcal{A}_k$, then $\sigma_t(u) =
  0$.}
\end{equation}
Indeed, this holds trivially for the Munn tree $\mathcal{A}_1$. Since
$\sigma_t(r) = \sigma_t(r')$ for every $(r,r') \in \mathcal{T} \setminus
\mathcal{T}^f$, this property is inherited through expansions in the
Stephen's sequence (expansions featuring relations of $\mathcal{T}^f$
are forbidden since $w \neq f$ in $S$). Finally, the property is
inherited through foldings as well: if $\mathcal{A}'$ is obtained from
$\mathcal{A}$ folding two edges of label $x$, then every loop in
$\mathcal{A}'$ can be lifted to a loop in $\mathcal{A}$ by
successively inserting factors of the form $x\inv x$. Therefore
(\ref{tloop}) holds. We complete now the proof of our proposition by showing that the
$\mathcal{T}^c$-closure of $\mathcal{A}_{m_1}$ is finite. Note that
the $\mathcal{T}^c$-closure of $\mathcal{A}_{m_1}$ is also its
$\mathcal{T}$-closure: we cannot use relations of $\mathcal{T}^t \cup
\mathcal{T}^w \cup \mathcal{T}^e$ by definition of $m_1$, and we
cannot use relations of $\mathcal{T}^f$ since $u \neq f$ in $S$.

Let $C_1, \ldots, C_{\ell}$ be the connected components of the automaton
obtained by removing all $Q$-edges from $\mathcal{A}_{m_1}$ (the
$c$-components). Clearly,
performing $\mathcal{T}^c$-expansions cannot merge $c$-components,
neither can folding edges with label in $\wt{A \cup t}$ (and folding
of $Q$-edges will not occur after $\mathcal{A}_{m_1}$. Therefore we
only need to prove that the $\mathcal{T}^c$-closure of a $c$-component
$C_j$ is finite for $j = 1,\ldots, \ell$. In view of (\ref{tloop}), it is enough
to prove that
\begin{itemize}
\item[(P)]
If $\mathcal{A}$ is a finite inverse $\wt{A\cup t}$-automaton where
every $u \in \wt{A\cup t}^*$ labelling a loop satisfies $\sigma_t(u) =
0$, then the $\mathcal{T}^c$-closure $\mathcal{B}$ of $\mathcal{A}$ is finite.
\end{itemize}
Let $n = |V(\mathcal{A})|$ and suppose that there is a path $i \mapright{v} j$
in $\mathcal{A}$ with $|\sigma_t(v)| \geq n$. We may assume that $v$ is
shortest possible. Since $|v| \geq n$, the path $i \mapright{v} j$
must contain a loop, hence we may factor it as
$$i \mapright{v_1} i_1 \mapright{u} i_1 \mapright{v_2} j$$
with $v = v_1uv_2$ and $u \neq 1$. Now $\sigma_t(u) = 0$ yields
$\sigma_t(v_1v_2) = \sigma_t(u)$ and so $v_1v_2$ is a shorter
alternative to $u$, a contradiction. Thus
$|\sigma_t(v)| < n$ whenever $v$ labels a path in $\mathcal{A}$.
We claim that also
\begin{equation}
\label{tbound}
|\sigma_t(v)| < n \mbox{ whenever $v$ labels a path in } \mathcal{B}.
\end{equation}
Indeed, it suffices to remark that this property is inherited through
$\mathcal{T}^c$-expansions and foldings, using the same argument in the
proof of (\ref{tloop}).

Assume that $i_1, \ldots, i_r$ are the vertices of $\mathcal{B}$
corresponding to the starting points of edges labelled by $a$ or
$\perp$ in $\mathcal{A}$. We claim that if $i \mapright{s} j$ is an
edge of $\mathcal{B}$ with $s \in A$, then there exists a path $i_k
\mapright{v} i$ in $\mathcal{B}$ for some $k \in \{ 1, \ldots, r\}$
and $v \in \wt{t}^*$. Once again, it suffices to check that this
property is inherited through $\mathcal{T}^c$-expansions and foldings,
a quite trivial observation. Since $\mathcal{B}$ is inverse, we may in
fact assume that $v \in t^* \cup (t\inv)^*$. In view of
(\ref{tbound}), it follows that there are only finitely many $A$-edges
in $\mathcal{B}$, and again by (\ref{tbound}), the number of $t$-edges
must also be finite. Therefore (P) holds and the proposition is proved.
\end{proof}

\section{Undecidability of the word problem for amalgams}

In this section we prove that the word problem for an amalgam\linebreak
$[S_1,S_2;U,\omega_1,\omega_2]$ of inverse
semigroups is undecidable even if we assume $S_1$ and $S_2$ (and
therefore $U$) to have
finite $\mathcal{R}$-classes and $\omega_1,\omega_2$ to be
computable functions. The idea is to relate the computation of a
deterministic reversible alternating $2$-counter machine with the \sch
graphs of the associated amalgam.

We fix now a normalized $CM(2)$ $\mathcal{M} = (Q,\delta,\iota,f)$ and consider
the semigroup $S$ defined by the associated amalgam
$[S_1,S_2;U,\omega_1,\omega_2]$. Write $X = X_1\cup X_2$ and
$\mathcal{T} = \mathcal{T}_1\cup\mathcal{T}_2\cup\mathcal{T}_3$.
Clearly $f_1$ and $f_2$ represent both the
zero of $S$. Let $(m,n) \in \mathbb{N}^2$ and write $w_{m,n} =
\perp_1a_1^m\iota_1 a_2^n\perp_2$. Suppose that
$$(\iota,m,n) = (q^{(0)},m_0,n_0) \vdash_{\mathcal{M}} \ldots
\vdash_{\mathcal{M}} (q^{(k)},m_k,n_k).$$
Since $\mathcal{M}$ is deterministic, there is at most one such
sequence of a given length starting with $(\iota,m,n)$.
Write $m'_k = \max\{ m_0, \ldots, m_k\}$, $n'_k = \max\{ n_0, \ldots,
n_k\}$. We define a finite inverse $X$-automaton
$\mathcal{B}_{m,n}^{(k)}$ as follows (describing only the edges with
positive label):
\begin{itemize}
\item
The vertices are of the form $c_{i,j}$ and $d_{i,\ell}$ for $i =
0,\ldots,k$ and $j = 0,\ldots,m'_k+1$ and $\ell = 0,\ldots,n'_k+1$.
\item
$c_{0,0}$ is initial and $d_{0,0}$ is final.
\item
There exist edges $c_{i-1,j} \longmapright{t_1,t_2} c_{i,j}$ for all $i =
1,\ldots,k$ and $j = 0,\ldots,m'_k+1$.
\item
There exist edges $d_{i-1,\ell} \longmapright{t_1,t_2} d_{i,\ell}$ for all $i =
1,\ldots,k$ and $\ell = 0,\ldots,n'_k+1$.
\item
There exist edges $c_{i,0} \mapright{\perp_1} c_{i,1}$ for all $i =
0,\ldots,k$.
\item
There exist edges $c_{i,j} \mapright{a_1} c_{i,j+1}$ for all $i =
0,\ldots,k$ and $j = 1,\ldots,m'_k$.
\item
There exist edges $d_{i,1} \mapright{\perp_2} d_{i,0}$ for all $i =
0,\ldots,k$.
\item
There exist edges $d_{i,j+1} \mapright{a_2} d_{i,j}$ for all $i =
0,\ldots,k$ and $j = 1,\ldots,n'_k$.
\item
There exist edges $c_{i,m_i+1} \vlongmapright{q_1^{(i)},q_2^{(i)}}
d_{i,n_i+1}$ for all $i = 0,\ldots,k$.
\end{itemize}

\begin{figure}
 \begin{center}
 \includegraphics[scale=0.8]{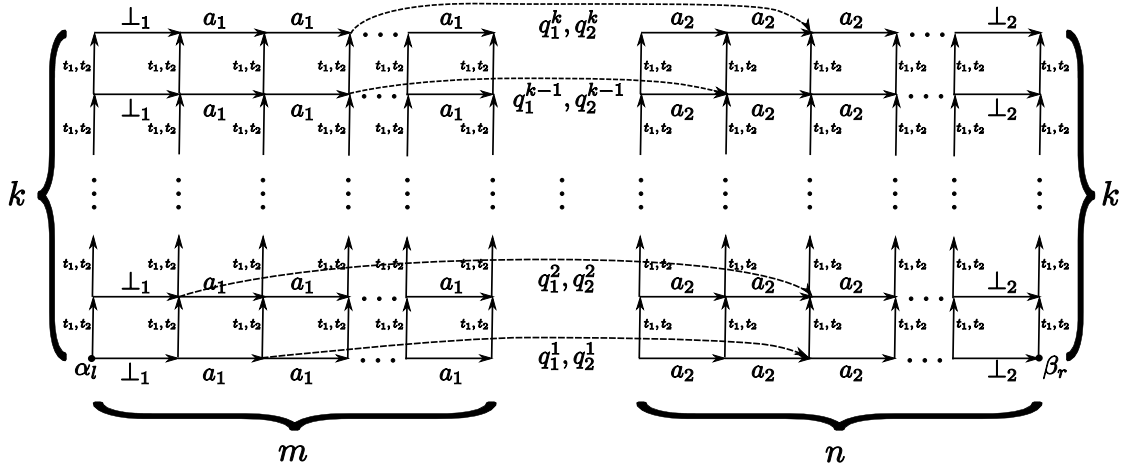}
 \end{center}
  \caption{The automaton $\mathcal{B}_{m,n}^{(k)}$}\label{fig3}\end{figure}
\begin{lemma}
\label{config}
Let $\mathcal{M}$ be a normalized $CM(2)$ and let
$m,n,k \in \mathbb{N}$. Then
$\mathcal{B}_{m,n}^{(k)}$ is a finite approximation of
$\mathcal{A}(X,\mathcal{T};w_{m,n})$.
\end{lemma}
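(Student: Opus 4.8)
The plan is to proceed by induction on $k$, showing that $\mathcal{B}_{m,n}^{(k)}$ is obtained from the Munn tree of $w_{m,n}$ by a sequence of $\mathcal{T}$-expansions and foldings, so that it is a finite approximation of $\mathcal{A}(X,\mathcal{T};w_{m,n})$ in the sense defined in Section 2. First I would dispose of the base case $k=0$: the automaton $\mathcal{B}_{m,n}^{(0)}$ consists of a single path $c_{0,0} \mapright{\perp_1} c_{0,1} \mapright{a_1} \cdots \mapright{a_1} c_{0,m+1} \vlongmapright{\iota_1,\iota_2} d_{0,n+1} \mapright{a_2} \cdots \mapright{a_2} d_{0,1} \mapright{\perp_2} d_{0,0}$ together with the inverse edges, which is exactly the Munn tree of $w_{m,n} = \perp_1 a_1^m \iota_1 a_2^n \perp_2$ after the folding that identifies the two $(\iota_1,\iota_2)$-parallel edges coming from the relation $(\iota_1,\iota_2)\in\mathcal{T}_3$; here one uses that $m_0=m$, $n_0=n$, $m'_0=m$, $n'_0=n$. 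This shows $\mathcal{B}_{m,n}^{(0)}$ is a finite approximation.

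For the inductive step, assume $\mathcal{B}_{m,n}^{(k)}$ is a finite approximation and that $(\iota,m,n)\vdash_{\mathcal{M}}^{k+1}(q^{(k+1)},m_{k+1},n_{k+1})$, so in particular the $(k{+}1)$-th step is governed by a unique instruction $(q^{(k)},i,h,q^{(k+1)})\in\delta$. The key step is to check, case by case on the type of this instruction, that applying the corresponding relation of $\mathcal{T}_i$ to the $q^{(k)}$-edge $c_{k,m_k+1}\vlongmapright{q^{(k)}_1,q^{(k)}_2} d_{k,n_k+1}$ of $\mathcal{B}_{m,n}^{(k)}$, followed by all the relevant $\mathcal{T}^c$-expansions (the commuting relations $t_i x = x t_i$, used to push the new $t$-edges across the whole $a$/$\perp$-track, thereby creating the new row of vertices $c_{k+1,\ast}$ or $d_{k+1,\ast}$) and then complete folding, produces exactly $\mathcal{B}_{m,n}^{(k+1)}$. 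Concretely: a test instruction $(q^{(k)},1,s,q^{(k+1)})$ uses the relation $sp_1 = s t_1 q_1 t_1\inv$ read at the appropriate vertex (with $s=\perp_1$ if $m_k=0$, $s=a_1$ if $m_k>0$, exactly matching conditions 1–2 in the definition of $\vdash_{\mathcal{M}}$), a right move $(q^{(k)},1,+,q^{(k+1)})$ uses $sp_1 = s t_1 a_1 q_1 t_1\inv$, which appends the extra $a_1$-edge and forces $m_{k+1}=m_k+1$ (so possibly $m'_{k+1}=m'_k+1$, enlarging the track), a left move uses $s a_1 p_1 = s t_1 q_1 t_1\inv$, and symmetrically for $i=2$ using the reversed relations of items (iii), (v), (vii). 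In each case the $\mathcal{T}_3$-relation $(q^{(k+1)}_1,q^{(k+1)}_2)$ folds the two parallel new $Q$-edges into the single edge $c_{k+1,m_{k+1}+1}\vlongmapright{q^{(k+1)}_1,q^{(k+1)}_2} d_{k+1,n_{k+1}+1}$, and the alternating hypothesis guarantees that the $t$-edges introduced at step $k{+}1$ sit on the correct track (tape $i$) without interfering with the previous rows.

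The main obstacle I expect is bookkeeping rather than conceptual: one must verify that no \emph{unwanted} foldings or coincidences occur — that the new vertices $c_{k+1,j}$, $d_{k+1,\ell}$ are genuinely distinct from the old ones and from each other, so that the result of the expansions/foldings is literally the automaton $\mathcal{B}_{m,n}^{(k+1)}$ as described and not some proper quotient of it. This is where the finiteness relations $\mathcal{T}^f$ and the structure imposed by normalization matter only indirectly (they are never applied, since $w_{m,n}\neq f$ in $S$), while the real work is to track the $t_1,t_2$-edges: because $\sigma_{t_i}$ is invariant under the relations used (cf. the argument for \eqref{tloop} in the proof of Proposition \ref{finiter}), two vertices on different rows cannot be identified, and within a row the $a_i$/$\perp_i$-labels keep the track vertices apart. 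I would also need to observe that the extra vertices $c_{k,j}$ with $j>m_k+1$ (resp. $d_{k,\ell}$ with $\ell>n_k+1$), present because the track is padded up to $m'_k+1$ (resp. $n'_k+1$), carry no $Q$-edge and are simply propagated upward by the $\mathcal{T}^c$-expansions on the $t_i$-edges, which is why the definition uses the running maxima $m'_k,n'_k$ rather than $m_k,n_k$. Assembling these observations gives that $\mathcal{B}_{m,n}^{(k)}$ is reachable from the Munn tree of $w_{m,n}$ by expansions and foldings for every $k$, which is precisely the assertion of the lemma.
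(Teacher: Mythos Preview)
Your proposal is correct and follows essentially the same inductive strategy as the paper: establish $\mathcal{B}_{m,n}^{(0)}$ as the Munn tree plus the $\mathcal{T}_3$-expansion $\iota_1=\iota_2$, then pass from $\mathcal{B}_{m,n}^{(k-1)}$ to $\mathcal{B}_{m,n}^{(k)}$ by applying the single relation in $\mathcal{T}_i^t\cup\mathcal{T}_i^w\cup\mathcal{T}_i^e$ corresponding to the $k$th instruction, then propagating the new $t$-edges across the tracks via $\mathcal{T}_1^c\cup\mathcal{T}_2^c$ and finally adding the parallel edges via $\mathcal{T}_3$. Two minor remarks: the $\mathcal{T}_3$-relation $(q_1^{(k+1)},q_2^{(k+1)})$ is used as an \emph{expansion} (adjoining a parallel edge), not a folding; and your appeal to the alternating hypothesis for placing the $t$-edges is not needed here, since $t_1=t_2$ in $\mathcal{T}_3$ already forces both labels at every row --- alternation only becomes relevant later, in Proposition~\ref{computsch}, to rule out extra $Q$-expansions. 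Your discussion of why no unwanted identifications occur is more explicit than the paper's (which simply declares the verification ``straightforward''), but is in the same spirit and does no harm.
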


\begin{proof}
We use induction on $k$. Clearly, $\mathcal{B}_{m,n}^{(0)}$ is the
Munn tree of $w_{m,n}$ with the edge $c_{0,m+1} \mapright{\iota_2}
d_{0,n+1}$ adjoined. Since $\iota_1 = \iota_2$ is a relation of
$\mathcal{T}$, the lemma holds for $k = 0$.

Assume now that $k > 0$ and $\mathcal{B}_{m,n}^{(k-1)}$ is a finite
approximation of the \sch automaton $\mathcal{A}(X,\mathcal{T};w_{m,n})$. Assume that
$$(\iota,m,n) \vdash_{\mathcal{M}}^{k-1} (q^{(k-1)},m_{k-1},n_{k-1})
\vdash_{\mathcal{M}} (q^{(k)},m_k,n_k).$$
Then $(q^{(k-1)},i,x,q^{(k)}) \in \delta$ for some $i \in \{ 1,2\}$
(assume $i = 1$ for simplicity, the case $i = 2$ being analogous)
and
$$x = \left\{
\begin{array}{ll}
a&\mbox{ if }m_{k} = m_{k-1} > 0\\
\perp&\mbox{ if }m_{k} = m_{k-1} = 0\\
+&\mbox{ if }m_{k} = m_{k-1} +1\\
-&\mbox{ if }m_{k} = m_{k-1} -1
\end{array}
\right.$$
The instruction $(q^{(k-1)},1,x,q^{(k)})$ produces a relation
$syq_1^{(k-1)} = st_1zq_1^{(k)}t_1\inv$ in
$\mathcal{T}_1^t \cup \mathcal{T}_1^w \cup \mathcal{T}_1^e$ with $y =
a_1$ if $x = -$ and $y = 1$ otherwise, and  $z =
a_1$ if $x = +$ and $z = 1$ otherwise. Using $s = \perp_1$ or $a_1$
according to the values of $m_{k-1}$ and $y$, we can now use this
relation to perform an expansion of the path
$$
c_{k-1, m_{k-1}-|y|}\vlongmapright{syq_1^{(k-1)}} d_{k-1, n_{k-1}+1}
$$
to get after subsequent folding the edges
$$c_{k,m_k+1}\mapright{q_1^{(k)}}d_{k,n_k+1}, c_{k-1,m_k+\varepsilon}\mapright{t_1}c_{k,m_k+\varepsilon}
$$
for $\varepsilon = 0$ or $1$, $d_{k-1,n_k+1} \mapright{t_1} d_{k,n_k+1}$
and possibly also an edge of the form $c_{k,m_k} \mapright{a_1}
c_{k,m_k+1}$. It is straightforward to check that, applying now the
relations from $\mathcal{T}_1^c \cup
\mathcal{T}_2^c$, followed by $t_1 = t_2$ and $q_1^{(k)} = q_2^{(k)}$,
we get precisely $\mathcal{B}_{m,n}^{(k)}$. Note that if $m_k
=m'_{k-1} +1$, we get an edge $c_{k,m'_{k-1}+1} \mapright{a_1}
c_{k,m'_k+1}$ through the expansion, and the
application of the relations from $\mathcal{T}_1^c$
yields indeed the required extra column of $\mathcal{B}_{m,n}^{(k)}$
with respect to $\mathcal{B}_{m,n}^{(k-1)}$. Therefore
$\mathcal{B}_{m,n}^{(k)}$ is a finite approximation of
$\mathcal{A}(X,\mathcal{T};w_{m,n})$ and the lemma is proved.
\end{proof}

It follows from the definition that $\mathcal{B}_{m,n}^{(k-1)}$ embeds
in $\mathcal{B}_{m,n}^{(k)}$ for every $k \geq 1$. Therefore we can
define $\mathcal{B}_{m,n}$ as the colimit of the sequence
$(\mathcal{B}_{m,n}^{(k)})_k$, where all the $\mathcal{B}_{m,n}^{(k)}$
embed. This colimit may be finite or infinite, depending on whether or not
the computation in $\mathcal{M}$ halts when we start with the
configuration $(\iota,m,n)$.

Let $\mathcal{C}$ denote the finite complete inverse $X$-automaton
with a single vertex.

\begin{prop}
\label{computsch}
Let $\mathcal{M}$ be a normalized $CM(2)$ and let
$m,n \in \mathbb{N}$. Then
$$\mathcal{A}(X,\mathcal{T};w_{m,n}) = \left\{
\begin{array}{ll}
\mathcal{C} & \mbox{ if $(m,n)$ is accepted by }\mathcal{M}\\
\mathcal{B}_{m,n} & \mbox{otherwise}
\end{array}
\right.$$
\end{prop}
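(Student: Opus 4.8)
The plan is to split the argument according to whether or not the computation of $\mathcal{M}$ started from $(\iota,m,n)$ halts. First I would treat the non-halting case. If $(m,n)$ is not accepted by $\mathcal{M}$, then by determinism there is an infinite sequence $(\iota,m,n)=(q^{(0)},m_0,n_0)\vdash_{\mathcal{M}}(q^{(1)},m_1,n_1)\vdash_{\mathcal{M}}\cdots$, and Lemma \ref{config} tells us each $\mathcal{B}_{m,n}^{(k)}$ is a finite approximation of $\mathcal{A}(X,\mathcal{T};w_{m,n})$. Since the $\mathcal{B}_{m,n}^{(k)}$ embed coherently into their colimit $\mathcal{B}_{m,n}$, and Stephen's colimit procedure is confluent (the colimit of any sequence of finite approximations obtained by $R$-expansions and foldings starting from the Munn tree is $\mathcal{A}(X,\mathcal{T};w_{m,n})$), it suffices to check that $\mathcal{B}_{m,n}$ is $\mathcal{T}$-closed. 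I would verify this by going through the relation types: the $\mathcal{T}_i^c$ relations are already saturated by construction (every $c$- and $d$-vertex carries both a $t_1$- and a $t_2$-edge where applicable, and the rows/columns are full grids); the $\mathcal{T}_i^t\cup\mathcal{T}_i^w\cup\mathcal{T}_i^e$ relations applied at the unique reachable $q$-edge $c_{k,m_k+1}\xrightarrow{q_1^{(k)},q_2^{(k)}}d_{k,n_k+1}$ produce exactly the edges of $\mathcal{B}_{m,n}^{(k+1)}$, which are already present in $\mathcal{B}_{m,n}$ (this is essentially the content of the inductive step of Lemma \ref{config}); the $\mathcal{T}_3$ relations are built in; and the $\mathcal{T}_i^f$ relations cannot be triggered because $w_{m,n}\neq f$ in $S$ (no factor of the forbidden shapes $a_i\perp_i\inv$, $p_i\inv q_i$, etc., occurs in $\mathcal{B}_{m,n}$, as a direct inspection of the grid shows). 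Hence $\mathcal{B}_{m,n}$ is its own $\mathcal{T}$-closure, so $\mathcal{A}(X,\mathcal{T};w_{m,n})=\mathcal{B}_{m,n}$.

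Next I would treat the halting case. Suppose $(\iota,m,n)\vdash_{\mathcal{M}}^k(f,m_k,n_k)$ with $q^{(k)}=f$. By Lemma \ref{config}, $\mathcal{B}_{m,n}^{(k)}$ is a finite approximation, so it embeds in $\mathcal{A}(X,\mathcal{T};w_{m,n})$; in particular $\mathcal{A}(X,\mathcal{T};w_{m,n})$ contains the edge $c_{k,m_k+1}\xrightarrow{f_1,f_2}d_{k,n_k+1}$. Now I would use the $\mathcal{T}_1^f$ (and $\mathcal{T}_2^f$) relations: since $f_1$ acts as a zero, the relation $f_1x=xf_1=f_1$ and the relations $q_1x=f_1$ etc. force, once an $f$-labelled edge appears, a cascade of $\mathcal{T}^f$-expansions and foldings that collapse the whole automaton. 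Concretely, from the edge labelled $f_1$ one performs $R$-expansions with $f_1x=f_1$ to create $f_1$-loops at both endpoints, then folds; propagating along the existing $t_1$-, $a_1$-, $\perp_1$-edges (each of which, composed with an $f_1$-edge, must by $f_1x=f_1$ be rerouted to the same vertex) one identifies all vertices. The outcome is that $\mathcal{A}(X,\mathcal{T};w_{m,n})$ has a single vertex with a loop for every letter of $\wt{X}$, i.e. it equals $\mathcal{C}$. Equivalently and more cleanly: $w_{m,n}\tau = f$ in $S$, since $w_{m,n}$ is $\mathcal{T}$-equivalent to a word containing the factor $q_1^{(k)}\cdots = f_1\cdots$, and the $\mathcal{R}$-class of the zero of $S$ is the single element $\{f\}$, whose \sch automaton is precisely $\mathcal{C}$.

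The main obstacle is the non-halting case, specifically the verification that $\mathcal{B}_{m,n}$ is $\mathcal{T}$-closed. One must carefully check that \emph{no} relation applies anywhere in the infinite grid $\mathcal{B}_{m,n}$ — not just at the "active" $q$-edge but at every $c$- and $d$-vertex and along every row and column — and in particular that the $\mathcal{T}_i^f$ relations are never enabled (which is where normalizedness and the structure of $w_{m,n}$, with its single occurrence of each $q^{(i)}$ and the barrier symbols $\perp_1,\perp_2$ at the ends, are essential). The determinism of $\mathcal{M}$ is what guarantees there is at most one reachable $q$-edge at each "time level" $i$, so that the expansion dictated by the unique instruction out of $q^{(i)}$ is the only one available; reversibility and the alternating property are what were already used, via Proposition \ref{finiter}, to keep the approximations finite and to control the $Q$-edges. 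I would also remark that the two cases are exhaustive and mutually exclusive by determinism of $\mathcal{M}$, and that in the halting case $\mathcal{B}_{m,n}$ is finite but is a \emph{proper} finite approximation, strictly smaller than $\mathcal{A}(X,\mathcal{T};w_{m,n})=\mathcal{C}$ — there is no contradiction with Lemma \ref{config}, which only asserts that each $\mathcal{B}_{m,n}^{(k)}$ is \emph{a} finite approximation, not that its colimit is the \sch automaton.
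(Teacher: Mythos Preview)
Your plan matches the paper's proof: show $\mathcal{B}_{m,n}$ is $\mathcal{T}$-closed when $(m,n)$ is not accepted, and show an $f$-edge forces collapse to $\mathcal{C}$ when it is. Two points need tightening.

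First, in the non-accepting case you write ``by determinism there is an infinite sequence''. This is not true: a deterministic $CM(2)$ may halt in a state $q^{(K)}\neq f$ with no applicable instruction (e.g.\ a left-move with counter at $0$, or a test whose symbol does not match). Then $\mathcal{B}_{m,n}=\mathcal{B}_{m,n}^{(K)}$ is finite, and you must check that no relation from $\mathcal{T}_i^t\cup\mathcal{T}_i^w\cup\mathcal{T}_i^e$ applies at the terminal $q^{(K)}$-edge. This follows because each such relation carries the contextual letter $s\in\{a_i,\perp_i\}$ (or the extra $a_i$ in the erasing case), and the instruction being inapplicable to the ID $(q^{(K)},m_K,n_K)$ is exactly the statement that the required context is absent in row $K$.

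Second, and more substantively, your closedness check only discusses \emph{forward} application of the $\mathcal{T}_i^t\cup\mathcal{T}_i^w\cup\mathcal{T}_i^e$ relations (from level $k$ to level $k+1$). But each relation $syp_1=st_1zq_1t_1^{-1}$ may equally be applied with the right-hand side already present: given the $q^{(k)}$-edge at level $k$, one can expand to produce a $p$-edge at level $k-1$ for any instruction $(p,1,x,q^{(k)})\in\delta$ whose context matches. It is \emph{reversibility} (not determinism) that guarantees this backward expansion yields only the $q^{(k-1)}$-edge already in $\mathcal{B}_{m,n}$; the paper invokes this explicitly. You attribute reversibility only to Proposition~\ref{finiter}, but it is needed again here, and your verification of closedness is incomplete without it. Similarly, the alternating hypothesis is what ensures that a relation from $\mathcal{T}_{3-i}$ cannot fire at a $Q$-edge that was just produced by a $\mathcal{T}_i$-expansion within the same level.
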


\begin{proof}
First of all, we claim that $\mathcal{B}_{m,n}$ is closed under all
relations of $\mathcal{T} \setminus (\mathcal{T}_1^f \cup
\mathcal{T}_2^f)$. This is immediate for all the relations in
$\mathcal{T}_1^c \cup \mathcal{T}_2^c \cup \mathcal{T}_3$. Since
$\mathcal{M}$ is deterministic, reversible and alternating, it is also
true for relations in $\mathcal{T}_i^t \cup \mathcal{T}_i^w \cup
\mathcal{T}_i^e$: no $Q$-edge can be involved in more than one
expansion. For instance, reversibility prevents the appearance in line
$i-1$ of some new $Q$-edge obtained through an expansion of a $Q$-edge
in line $i$.

Now, if $(m,n)$ is accepted by $\mathcal{M}$, then $f$ labels some
edge in $\mathcal{B}^{(k)}_{m,n}$ for some $k$ and so the relations of
$\mathcal{T}_1^f \cup \mathcal{T}_2^f$ eventually collapse
$\mathcal{B}^{(k)}_{m,n}$ into $\mathcal{C}$. Since
$\mathcal{B}^{(k)}_{m,n}$ is a finite approximation of
$\mathcal{A}(X,\mathcal{T};w_{m,n})$, it follows that
$\mathcal{A}(X,\mathcal{T};w_{m,n}) = \mathcal{C}$.

Finally, assume that $(m,n)$ is not accepted by $\mathcal{M}$. It is
straightforward to check that in this case $\mathcal{B}_{m,n}$ must be
also closed under $\mathcal{T}_1^f \cup
\mathcal{T}_2^f$, hence $\mathcal{B}_{m,n}$ is
$\mathcal{T}$-closed. Since
$\mathcal{B}^{(k)}_{m,n}$ is a finite approximation of
$\mathcal{A}(X,\mathcal{T};w_{m,n})$, we have $L(\mathcal{B}^{(k)}_{m,n})
\subseteq L(\mathcal{A}(X,\mathcal{T};w_{m,n}))$ for every $k$.
Hence
$$L(\mathcal{B}_{m,n}) = \bigcup_{k \geq 0} L(\mathcal{B}^{(k)}_{m,n})
\subseteq L(\mathcal{A}(X,\mathcal{T};w_{m,n})).$$
Since $\mathcal{A}(X,\mathcal{T};w_{m,n})$ is the smallest $\mathcal{T}$-closed
inverse $X$-automaton recognizing $w_{m,n}$, it follows that
$\mathcal{A}(X,\mathcal{T};w_{m,n}) = \mathcal{B}_{m,n}$ as claimed.
\end{proof}

Since $\mathcal{C}$ is the \sch automaton of the zero of $S$, we
immediately get:

\begin{theorem}
\label{theequiv}
Let $\mathcal{M}$ be a normalized $CM(2)$ and let
$m,n \in \mathbb{N}$. Then $w_{m,n} = 0$ in the associated amalgam
$[S_1,S_2;U,\omega_1,\omega_2]$
if and only if
$(m,n)$ is accepted by $\mathcal{M}$.
\end{theorem}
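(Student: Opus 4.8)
The plan is to read off Theorem~\ref{theequiv} as an almost immediate consequence of Proposition~\ref{computsch}, so the work is really just connecting the word problem in the amalgam to the equality of \sch automata. First I would recall the basic fact, stated in the section on inverse semigroups, that two words $w,w' \in \wt{X}^+$ represent the same element of $S$ if and only if $\mathcal{A}(X,\mathcal{T};w) = \mathcal{A}(X,\mathcal{T};w')$. Applying this with $w = w_{m,n}$ and $w'$ any word representing the zero of $S$ (for instance $f_1$, which by the opening remarks of the final section represents the zero of $S = S_1*_US_2$), we get that $w_{m,n} = 0$ in $S$ precisely when $\mathcal{A}(X,\mathcal{T};w_{m,n}) = \mathcal{A}(X,\mathcal{T};f_1)$.

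Next I would identify $\mathcal{A}(X,\mathcal{T};f_1)$ with $\mathcal{C}$: since $f_1$ is a zero, its $\mathcal{R}$-class is a singleton and every letter of $\wt{X}$ fixes it in the Cayley graph, so the underlying \sch graph has a single vertex with a loop for every letter, which is exactly the finite complete inverse $X$-automaton $\mathcal{C}$ with one vertex. Hence $w_{m,n} = 0$ in $S$ iff $\mathcal{A}(X,\mathcal{T};w_{m,n}) = \mathcal{C}$.

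Finally I invoke Proposition~\ref{computsch}: it gives $\mathcal{A}(X,\mathcal{T};w_{m,n}) = \mathcal{C}$ when $(m,n)$ is accepted by $\mathcal{M}$, and $\mathcal{A}(X,\mathcal{T};w_{m,n}) = \mathcal{B}_{m,n}$ otherwise. So it only remains to observe that in the non-accepted case $\mathcal{B}_{m,n} \neq \mathcal{C}$; but $\mathcal{B}_{m,n}$ contains the initial vertex $c_{0,0}$ and, for instance, the distinct final vertex $d_{0,0}$ (indeed $\mathcal{B}_{m,n}^{(0)}$ is already the Munn tree of $w_{m,n}$ with an extra edge, and $w_{m,n}$ is not the empty word), so $\mathcal{B}_{m,n}$ has more than one vertex and is therefore distinct from $\mathcal{C}$. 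Combining the three steps yields the stated equivalence. There is essentially no obstacle here, since all the genuine content has been pushed into Proposition~\ref{computsch} and Lemma~\ref{config}; the only point requiring a line of care is the identification of $\mathcal{A}(X,\mathcal{T};f_1)$ with $\mathcal{C}$ and the remark that $\mathcal{B}_{m,n}$ is always a proper, nontrivial automaton.
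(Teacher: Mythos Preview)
Your proposal is correct and follows exactly the paper's approach: the theorem is stated in the paper as an immediate consequence of Proposition~\ref{computsch} together with the single observation that $\mathcal{C}$ is the \sch automaton of the zero of $S$. You have simply spelled out the details (the identification $\mathcal{A}(X,\mathcal{T};f_1)=\mathcal{C}$ and the remark that $\mathcal{B}_{m,n}\neq\mathcal{C}$ because $\mathcal{B}_{m,n}^{(0)}$ embeds in it and already has several vertices), which the paper leaves implicit.
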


We can now prove our main result:

\begin{theorem}
The word problem for an amalgam
$[S_1,S_2;U,\omega_1,\omega_2]$ of inverse
semigroups may be undecidable even if we assume $S_1$ and $S_2$ (and
therefore $U$) to have
finite $\mathcal{R}$-classes and $\omega_1,\omega_2$ to be
computable functions.
\end{theorem}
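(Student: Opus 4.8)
The final theorem follows quite directly from the machinery already assembled, so the plan is essentially to package the pieces. First I would invoke Corollary~\ref{cor:universal} to fix a normalized $CM(2)$ $\mathcal{M}^*$ for which the acceptance problem ``is $(m,n)$ accepted by $\mathcal{M}^*$?'' is undecidable as $(m,n)$ ranges over $\mathbb{N}^2$. Then I would form the amalgam $[S_1,S_2;U,\omega_1,\omega_2]$ associated to $\mathcal{M}^*$ as in Definition~\ref{defn:amalgam associate to M}, with $S$ the corresponding free product with amalgamation.

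Next I would check that this amalgam meets the stated hypotheses. Finiteness of the $\mathcal{R}$-classes of $S_1$ and $S_2$ is exactly Proposition~\ref{finiter}; since $U$ embeds into $S_1$ via $\omega_1$ (Proposition~\ref{prop:embedding property}), its $\mathcal{R}$-classes are finite as well. Computability of $\omega_1,\omega_2$ is immediate from their explicit description on generators in Proposition~\ref{prop:embedding property} together with the fact that the presentations $\mathrm{Inv}\la X_i\mid \mathcal{T}_i\ra$ and $\mathrm{Inv}\la X_U\mid \mathcal{T}_U\ra$ are finite (one picks any computable choice of the sections $\lambda_i$). So all the side conditions in the statement are satisfied.

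Finally I would derive undecidability of the word problem for $S$ from Theorem~\ref{theequiv}: for each $(m,n)\in\mathbb{N}^2$ the word $w_{m,n}=\perp_1a_1^m\iota_1a_2^n\perp_2$ equals $0$ in $S$ if and only if $(m,n)$ is accepted by $\mathcal{M}^*$. The map $(m,n)\mapsto w_{m,n}$ is plainly computable, so a decision procedure for ``$w_{m,n}=0$ in $S$'' — a special case of the word problem — would decide acceptance by $\mathcal{M}^*$, contradicting Corollary~\ref{cor:universal}. Hence the word problem for $S$ is undecidable, which proves the theorem.

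There is no serious obstacle at this stage: all the real work is in Propositions~\ref{prop:embedding property} and~\ref{finiter}, Lemma~\ref{config}, Proposition~\ref{computsch}, and Theorem~\ref{theequiv}. The only point needing a word of care is the claim that the zero of $S$ is recognized by the one-vertex automaton $\mathcal{C}$ and hence that $w_{m,n}=0$ is genuinely an instance of the word problem rather than some weaker question; this is already noted just before Theorem~\ref{theequiv}. So the proof is just an assembly: fix $\mathcal{M}^*$, take its amalgam, verify the hypotheses, and reduce the (undecidable) acceptance problem to the word problem via $w_{m,n}$.
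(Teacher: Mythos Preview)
Your proposal is correct and follows essentially the same route as the paper's proof: fix the machine $\mathcal{M}^*$ from Corollary~\ref{cor:universal}, observe that the associated amalgam satisfies the hypotheses via Proposition~\ref{finiter}, and reduce the acceptance problem to the word problem through Theorem~\ref{theequiv}. The paper's version is terser, but the logical content is identical.
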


\begin{proof}
If the word problem would be decidable in these circumstances, then,
 in view of Proposition \ref{finiter} and Theorem \ref{theequiv}, we could
decide whether or not a normalized $CM(2)$ accepts a given $(m,n) \in
\mathbb{N}^2$. And the latter is undecidable, even when we consider the single
$CM(2)$ $\mathcal{M}^*$ of Corollary \ref{cor:universal}.
\end{proof}
As a consequence of Theorem \ref{theequiv} we have also the following:
\begin{theorem}
  Checking finiteness for a $\mathcal{D}$-class of an amalgam
  $[S_1,S_2;U,\omega_1,\omega_2]$ of
  inverse semigroups may be undecidable even if we assume $S_1$ and
  $S_2$ (and therefore $U$) to have finite $\mathcal{D}$-classes.
\end{theorem}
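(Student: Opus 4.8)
The plan is to leverage Theorem \ref{theequiv} together with the structural description of the \sch automaton $\mathcal{A}(X,\mathcal{T};w_{m,n})$ provided by Proposition \ref{computsch}. Recall that the $\mathcal{D}$-class of an element $s$ in an inverse semigroup is finite if and only if the \sch graph of $s$ has finitely many vertices (the $\mathcal{R}$-class), combined with the fact that all $\mathcal{R}$-classes in a $\mathcal{D}$-class have the same cardinality; equivalently, here, the $\mathcal{D}$-class of $w_{m,n}\tau$ is finite precisely when $\mathcal{A}(X,\mathcal{T};w_{m,n})$ is a finite automaton. By Proposition \ref{computsch}, this \sch automaton equals $\mathcal{C}$ (a single vertex, hence finite) when $(m,n)$ is accepted by $\mathcal{M}$, and equals $\mathcal{B}_{m,n}$ otherwise; and by the remark following Lemma \ref{config}, $\mathcal{B}_{m,n}$ is finite if and only if the computation of $\mathcal{M}$ starting from $(\iota,m,n)$ halts, i.e. again if and only if $(m,n)$ is accepted.

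So the first step is to check the claim that finiteness of the $\mathcal{D}$-class of $w_{m,n}$ in $S$ is equivalent to finiteness of $\mathcal{A}(X,\mathcal{T};w_{m,n})$; this is standard inverse semigroup theory (a $\mathcal{D}$-class is a disjoint union of $\mathcal{R}$-classes all of the same size, and each $\mathcal{R}$-class is the vertex set of the \sch graph). The second step is to observe that in \emph{both} cases of Proposition \ref{computsch} the \sch automaton is finite exactly when $(m,n)$ is accepted: in the accepting case it is $\mathcal{C}$, which is finite, and $w_{m,n}=0$ whose $\mathcal{D}$-class $\{0\}$ is trivially finite; in the non-accepting case it is $\mathcal{B}_{m,n}$, which is finite if and only if $m'_k$ and $n'_k$ stabilize, which happens if and only if the (unique, since $\mathcal{M}$ is deterministic) computation from $(\iota,m,n)$ is finite — but a finite non-halting computation is impossible for a deterministic machine without a halting configuration being reached, so $\mathcal{B}_{m,n}$ is infinite whenever $(m,n)$ is not accepted. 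Hence: the $\mathcal{D}$-class of $w_{m,n}$ in $S$ is finite $\iff$ $(m,n)$ is accepted by $\mathcal{M}$.

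The third step is then immediate: taking $\mathcal{M} = \mathcal{M}^*$ from Corollary \ref{cor:universal}, deciding finiteness of $\mathcal{D}$-classes of $S$ would decide acceptance by $\mathcal{M}^*$, which is undecidable. Finally, one must note that the hypothesis ``$S_1$ and $S_2$ have finite $\mathcal{D}$-classes'' is satisfied: Proposition \ref{finiter} gives finite $\mathcal{R}$-classes for $S_i$, and since $S_i$ is an inverse semigroup each $\mathcal{D}$-class is a union of $\mathcal{R}$-classes of equal size, but here we additionally need that each $\mathcal{D}$-class contains only finitely many $\mathcal{R}$-classes; this follows because $S_i$ being generated by a finite alphabet with finite $\mathcal{R}$-classes forces each $\mathcal{D}$-class to be finite as well (an $\mathcal{R}$-class and an $\mathcal{L}$-class in a common $\mathcal{D}$-class meet in a single $\mathcal{H}$-class, and finiteness of $\mathcal{R}$-classes bounds the $\mathcal{L}$-classes through them).

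The main obstacle is making precise the dictionary between finiteness of a $\mathcal{D}$-class and finiteness of the \sch automaton, and in particular verifying that finite $\mathcal{R}$-classes imply finite $\mathcal{D}$-classes in $S_1, S_2$ — this requires a short argument rather than a one-line citation, since $\mathcal{D}$-finiteness is genuinely stronger than $\mathcal{R}$-finiteness in general. Everything else is a direct transcription of Theorem \ref{theequiv} and Proposition \ref{computsch} into the language of $\mathcal{D}$-classes.
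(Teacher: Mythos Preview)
Your overall strategy is sound and close to the paper's, but there is a genuine gap in your second step. You assert that in the non-accepting case $\mathcal{B}_{m,n}$ is infinite, arguing that a finite computation must reach the halting state $f$. This is not justified: a normalized $CM(2)$ can perfectly well get stuck in a non-final state (e.g.\ a state with no outgoing instruction, or a left-move instruction applied to a counter at $0$). In that situation the computation from $(\iota,m,n)$ is finite, $\mathcal{B}_{m,n}=\mathcal{B}_{m,n}^{(K)}$ is finite, yet $(m,n)$ is \emph{not} accepted. So the biconditional ``$\mathcal{A}(X,\mathcal{T};w_{m,n})$ finite $\iff$ $(m,n)$ accepted'' is not available without further work (one could repair this by first modifying $\mathcal{M}^*$ so that every non-final dead configuration is replaced by an explicit infinite loop, but you do not do this). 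Your intermediate claim that $\mathcal{B}_{m,n}$ is finite iff $m'_k,n'_k$ stabilize is also incorrect: the number of rows of $\mathcal{B}_{m,n}^{(k)}$ grows with $k$, so $\mathcal{B}_{m,n}$ is finite iff the computation itself is finite, regardless of whether the counters stay bounded.

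The paper sidesteps this issue entirely with a different, more robust argument. Rather than proving a clean biconditional, it observes: if the \sch automaton is infinite then (by Proposition~\ref{computsch}) $(m,n)$ is not accepted; if it is finite then Stephen's approximation sequence actually terminates and \emph{computes} it, after which one simply checks whether the result equals $\mathcal{C}$. Thus an oracle for finiteness of the \sch automaton yields a decision procedure for acceptance by $\mathcal{M}^*$, contradicting Corollary~\ref{cor:universal}. This argument does not care whether $\mathcal{B}_{m,n}$ can be finite in a non-accepting run. Your treatment of the hypothesis on $S_1,S_2$ (deducing finite $\mathcal{D}$-classes from Proposition~\ref{finiter} via the eggbox structure of an inverse semigroup) is correct, and in fact slightly more explicit than the paper, which leaves that step implicit.
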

\begin{proof}
Clearly, checking the finiteness of a $\mathcal{D}$-class of a word $w$
in $S_1*_U S_2$ is equivalent to check whether or not the
corresponding \sch automaton is finite. Consider again the $CM(2)$
$\mathcal{M}^*$ of Corollary \ref{cor:universal} and the associated
amalgam. By Proposition
\ref{computsch}, if
$\mathcal{A}(X,\mathcal{T};w_{m,n})$ would be infinite, then $(m,n)$
is not accepted by $\mathcal{M}^*$. On the other hand, if
$\mathcal{A}(X,\mathcal{T};w_{m,n})$ would be finite, we could use the
Stephen's sequence to actually compute it \cite{Steph} and decide
whether or not $\mathcal{M}^*$ accepts $(m,n)$. Therefore we cannot
decide whether or not $\mathcal{A}(X,\mathcal{T};w_{m,n})$ is finite.
\end{proof}

\section*{Acknowledgments}
Both authors acknowledge support from C.M.U.P., financed by F.C.T.
(Portugal) through the programmes POCTI and POSI, with national and
E.U. structural funds. The first-named author acknowledges also the support of the FCT project SFRH/BPD/65428/2009.

\bibliography{biblio}
\bibliographystyle{plain}

\end{document}